\newcommand{\ds}{\displaystyle}
\newcommand{\B}{\tilde{B}}
\newcommand{\Ht}{\tilde{H}}
\newcommand{\Jt}{\tilde{J}}
\newcommand{\modulo}{\,(\text{mod } 2)}
\theoremstyle{definition}
\newtheorem{thm}{Theorem}[section]
\newtheorem{prop}[thm]{Proposition}
\newtheorem{rem}[thm]{Remark}
\numberwithin{equation}{section}
\title[Ghost series and Andrews-Bressoud identities] {Ghost series and
a motivated proof of the Andrews-Bressoud identities}
\author{Shashank Kanade, James Lepowsky, Matthew C. Russell and Andrew V. Sills}
\thanks{The work of the fourth author is supported in part by
National Security Agency Grant P13217-39G3217.}
\begin{document}

\begin{abstract}
We present what we call a ``motivated proof'' of the
Andrews-Bressoud partition identities for even moduli.  A ``motivated
proof'' of the Rogers-Ramanujan identities was given by G.\ E.\ Andrews
and R.\ J.\ Baxter, and this proof was generalized to the odd-moduli
case of Gordon's identities by J.\ Lepowsky and M.\ Zhu.  Recently, a
``motivated proof'' of the somewhat analogous
G\"ollnitz-Gordon-Andrews identities has been found.  In the present
work, we introduce ``shelves'' of formal series incorporating what we
call ``ghost series,'' which allow us to pass from one shelf to the
next via natural recursions, leading to our motivated proof.  We
anticipate that these new series will provide insight into the ongoing
program of vertex-algebraic categorification of the various
``motivated proofs.''
\end{abstract}

\maketitle

\section{Introduction}

The classical Rogers-Ramanujan partition identities have numerous
generalizations in various directions, notable among which are the
generalizations by Gordon-Andrews for odd moduli, extended to even
moduli by Andrews-Bressoud.  The
product sides of the Rogers-Ramanujan identities enumerate the
partitions whose parts obey certain restrictions modulo 5, and the sum
sides enumerate the partitions with certain difference-two and initial
conditions.  Generalizations of the Rogers-Ramanujan identities for
all odd moduli were discovered by B. Gordon \cite{G} and G. E. Andrews
\cite{A1}.  Analogous identities for the even moduli of the form
$4k+2$ were discovered by Andrews in \cite{A2} and \cite{A3}, and
subsequently, for all the even moduli, by D. M. Bressoud in \cite{Br}.  The
Andrews-Bressoud identities state that for any $k\ge 2$ and
$i\in \left\{1,\dots,k\right\}$,
\begin{align*}
\frac{\prod_{m\ge 1}
(1-q^{2km})(1-q^{2km-k-i+1})(1-q^{2km-k+i-1})}{\prod_{m\ge
1}(1-q^m)} = \sum\limits_{n\ge 0} b_{k,i}(n)q^n,
\end{align*}
where $b_{k,i}(n)$ is the number of partitions
$\pi=(\pi_1,\dots,\pi_{s})$ of $n$ (with $\pi_t\geq \pi_{t+1}$) such
that
\begin{enumerate}
\item $\pi_{t} - \pi_{t+k-1} \geq 2$, \item $\pi_t - \pi_{t+k-2}
\leq 1$ only if $\pi_t + \pi_{t+1} + \cdots + \pi_{t+k-2} \equiv
i+k \pmod{2}$,
\item at most $k-i$ parts of $\pi$ equal 1.
\end{enumerate}
Here we have replaced $r$ by $k-i+1$ in the statement of the main
theorem of \cite{Br}. Also, here and below, $q$ is a formal variable.

The product side above is the generating function for
partitions not congruent to $0$ or $\pm(k-i+1)$ modulo $2k$, 
except in the case $i=1$. 
The statement of the main theorem in \cite{Br} excluded this exceptional case,
simply because no natural combinatorial
interpretation of the corresponding product side was known at the
time, but the proof of the main theorem in \cite{Br}, in particular,
Lemma 3, certainly did cover this case.  Building on work of Andrews-Lewis
\cite{AL}, an elegant combinatorial interpretation of the product in
the case $i=1$ was discovered in \cite{S}.

As we will recall below, the Gordon-Andrews-Bressoud identities, as
well as more general families of such identities, also arise very
naturally from the representation theory of vertex operator algebras.
In \cite{LW2}--\cite{LW4}, the Rogers-Ramanujan identities were
proved, and the more general Gordon-Andrews-Bressoud identities were
interpreted (including the case $i=1$ for even moduli), using the
theory of $Z$-algebras, which were invented for this very purpose.  In
\cite{MP}, this interpretation of the more general
Gordon-Andrews-Bressoud identities was strengthened to a
vertex-operator-theoretic proof of these identities.  The $Z$-algebra
structures came to be understood in retrospect as the natural
generating substructures of certain twisted modules for certain
generalized vertex operator algebras, once the theory of generalized
vertex operator algebras and their modules and twisted modules was
later developed.

As is recalled in the Introduction of \cite{CKLMQRS}, the
Rogers-Ramanujan identities also arose in R. J. Baxter's work
\cite{Ba} in statistical mechanics.  Since then, advances in this area
have been made by a large number of authors, including A. Berkovich,
P. J. Forrester, B. McCoy, A. Schilling and S. O. Warnaar among many
others; see the references in \cite{GOW} for a variety of relevant
works.  In \cite{GOW}, M. Griffin, K. Ono and Warnaar have illuminated
certain arithmetic properties of Rogers-Ramanujan-type expressions
using a framework incorporating the Hall-Littlewood polynomials.

In the present paper we shall present what we call a ``motivated
proof'' of the Andrews-Bressoud even-modulus identities.  The first
``motivated proof'' in this spirit was given by G. E. Andrews and
Baxter for the Rogers-Ramanujan identities in \cite{AB}, and
that proof was observed in that paper to be essentially the same as a
Rogers-Ramanujan proof in \cite{RR} and as Baxter's proof in
\cite{Ba}. This proof was generalized to the case of the
Gordon-Andrews identities in \cite{LZ} and recently, a ``motivated proof''
of the somewhat analogous G\"ollnitz-Gordon-Andrews identities was
presented in \cite{CKLMQRS}.

However, the problem, solved in this paper, of constructing an
analogous ``motivated proof'' of the even-modulus Andrews-Bressoud
identities turned out to be much more subtle than for the
Gordon-Andrews and G\"ollnitz-Gordon-Andrews identities.  Interesting
new phenomena emerged, as we shall see in a moment.

For discussions of the structure of ``motivated proofs''
(this designation has now become a technical term in our work,
and we shall drop the quotation marks) and relevant
terminology, and of the likely importance of such proofs from the
standpoint of vertex operator algebra theory, we refer the reader to the
Introductions in \cite{LZ} and \cite{CKLMQRS}, which also include some
references touching briefly on some of the large amount of work that
has been done relating partition identities, old and new, with vertex
operator algebra theory.  This importance can be summarized as
follows: It is expected that vertex-algebraic ``categorifications'' of
the steps in the motivated proofs will provide useful insights into
the representation theory of vertex operator algebras and will also
facilitate the discovery of futher families of natural identities in
this spirit.

As is explained in the Introduction of \cite{CKLMQRS}, motivated
proofs, as now understood, proceed by the creation of successively
higher ``shelves'' of formal power series in a formal variable $q$
(involving only nonnegative integral powers of $q$).
For a fixed integer $k$, the given product expressions form the
0$^\text{th}$ shelf, and each new shelf is formed by taking linear
combinations of the $q$-series on the previous shelf, over the field
of fractions of the ring of polynomials in the formal variable $q$.
The exact formulas for these linear combinations constitute a crucial
ingredient of the motivated proofs.  The main point is that for each
constructed series, when the leading term 1 is subtracted, the result
is a series divisible by a high power of $q$; this mechanism was
called the ``Empirical Hypothesis'' in \cite{AB}, and in all these
motivated proofs, such an Empirical Hypothesis readily leads to a
proof of the desired identities.  As an example, consider the
simplest nontrivial case---that of the Rogers-Ramanujan identities: Here,
the 0$^\text{th}$ shelf is formed by the two Rogers-Ramanujan
products, say, $G_1$ and $G_2$, and the two $q$-series forming the
$1^\text{st}$ shelf are a copy of $G_2$ itself and
$G_3=(G_1-G_2)/q$, which indeed turns out to be a formal power series
in $q$.  The higher shelves are then formed analogously.
The motivated proof hinges on the Empirical Hypothesis that the 
$j^\text{th}$ series $G_j$, again a formal power series, is such that
$G_j - 1$ is divisible by $q^j$.

The source of the deep connection with the representation theory of
vertex operator algebras is that the products $G_1$ and $G_2$ are both
the graded dimensions (i.e., the generating functions for the dimensions
of the finite-dimensional homogeneous subspaces)
of certain twisted modules for a certain
generalized operator vertex operator algebra, and formulas of the type
$G_{3}=(G_1-G_2)/q$ are expected to be ``categorified'' by means of
exact sequences among these modules, where the maps between the
modules are formed using vertex-algebraic intertwining operators.  This
phenomenon is expected to arise in considerable generality.
The broad program of such
categorifications was initiated by J.\ Lepowsky and is an active area
of research.  That such categorifications can be found for motivated
proofs using ``twisted'' and ``relativized'' intertwining operators
was an idea of J.\ Lepowsky and A.\ Milas, and is currently under
investigation.  Similar categorifications of the Rogers-Ramanujan,
Rogers-Selberg and Euler recursions, as well as of new
recursions, using untwisted and unrelativized intertwining operators,
have already been successfully carried out in \cite{CLM1}, \cite{CLM2}, 
\cite{Cal1}, \cite{Cal2}, \cite{CalLM1}--\cite{CalLM4} and \cite{Sa}.  

The products in the Gordon-Andrews identities are the graded
dimensions of twisted modules for certain generalized vertex operator
algebras corresponding to the affine Lie algebra $A_1^{(1)}$ at odd
levels, and the Andrews-Bressoud identities similarly arise from the
even levels (see \cite{LM} --- where the vertex-algebraic structure had not
yet been discovered --- and \cite{LW1}--\cite{LW4}).  Lepowsky-Wilson's
$Z$-algebraic interpretation of the sum sides of these identities
treats all the levels (and hence both families of identities) on an
equal footing (\cite{LW1}--\cite{LW4}).  Correspondingly, having
motivated proofs of both families of identities, not just of the
odd-modulus identities, would provide important insight.  For the full
family of Gordon-Andrews identities, the aforementioned linear
combinations have the same ``shape'' as for the Rogers-Ramanujan
special case.  Namely, they involve appropriate subtractions in the
numerator and a pure power of $q$ in the denominator.  However, the
case of the even-modulus Andrews-Bressoud identities is starkly
different, in that the denominator now involves more complicated
expressions such as a sum of two pure powers of $q$.  The source of
this substantial subtlety is the parity conditions on the sum sides of
the identities (see (2) above).  Such a division is expected to be
difficult to categorify (and is not ``motivated,'' either), and the
problem now was to try to invent a natural mechanism that would
restore one's ability to divide by pure powers of $q$, and to thereby
arrive at a suitable ``Empirical Hypothesis,'' for the even moduli.

We achieve this in the present paper by introducing shelves of what we
call ``ghost series.''  Given the $j^\text{th}$ shelf of ``official''
Bressoud series, we introduce relations which simultaneously define a
$j^\text{th}$ shelf of ``ghost series'' and also facilitate the
passage to a $(j+1)^\text{st}$ shelf using pure powers of $q$ dividing
the subtractions of the official and the ghost series.  In this way,
we are able to obtain an Empirical Hypothesis that yields the
Andrews-Bressoud identities.  After our main theorem establishing
these identities has been proved, we present a combinatorial
interpretation of the ghost series, and it turns out that they differ
{}from the official series only in the parity condition.

In addition to the appearance of ghost series, there is another
significant way in which this work differs from the earlier works
\cite{AB}, \cite{LZ} and \cite{CKLMQRS}.  In those works, the
recursions defining the successively higher shelves could be, or could
have been, predicted by appropriately specializing the already-known
$(a,x,q)$-recursions, proved by Andrews in Chapter 7 of \cite{A3},
satisfied by the corresponding Rogers-Andrews expressions given in
(7.2.1) and (7.2.2) of \cite{A3}, and the recursions 
for proving the G\"ollnitz-Gordon-Andrews identities were indeed
``discovered'' in this way in \cite{CKLMQRS} (cf. Appendices A and C of
\cite{CKLMQRS}).  But in the case of the Andrews-Bressoud identities,
unlike in \cite{CKLMQRS}, the interplay between the various formal
power series, including the ghost series, was discovered by purely
empirical means.  In particular, the ``Empirical Hypothesis'' in the
present paper was {\it indeed} empirical; in \cite{CKLMQRS}, the
``Empirical Hypothesis'' was ``secretly known in advance'' because of
its source in Andrews's just-mentioned $(a,x,q)$-recursions. 
Chronologically, the decision was first made to examine these ghost series 
(with the ``wrong'' parity conditions), using the combinatorial 
interpretations stated in Section 7, in our search for a proper 
``Empirical Hypothesis.'' 
Then, the relations which enable one to simultaneously
define the ghost series and move to the next shelf via a division by a
pure power of $q$ were experimentally determined.
It is also interesting to note that the proof of our main theorem requires
the Empirical Hypothesis only for the official series and not for the
ghost series.

The discovery of ghost series raises many interesting
questions.  These series have not yet been observed to come from
vertex-algebraic structures, but they seem extremely natural from the point
of view of the categorification mentioned above.  Therefore, it is natural
to ask where they reside in the theory of vertex operator algebras.  This
question is being examined.  Their modularity properties and
representations as multisums and weighted $q$-sums of infinite
products are under investigation as well.  It is hoped that the new
insight of ghost series will prove fruitful in providing motivated
proofs of other Rogers-Ramanujan-style partition identities, and such
a program is also underway.

This paper is organized as follows: In Section \ref{sec:setting} we
recall basic notations regarding $q$-series and partitions.  Then,
after using the Jacobi triple product identity to re-express the
Andrews-Bressoud products in the usual more amenable form, we present
our recursions for building the official and ghost series comprising
the higher shelves.  In Section \ref{sec:0ghosts}, we derive
closed-form expressions for the 0$^\text{th}$-shelf ghosts. These
expressions are used as the base case of the induction in the proof of
Theorem \ref{thm:main} in Section \ref{sec:closedforms}, giving
closed-form expressions for the official series and the ghost series
on all the shelves.  In Section \ref{sec:EH}, using Theorem
\ref{thm:main}, we derive our Empirical Hypothesis in a number of
different strengths; the weakest of these is enough to prove our main
theorem.  We put various recursions and relations into an elegant
matrix formulation in Section \ref{sec:matrix}, and as a consequence
we derive one form of our main theorem, Theorem
\ref{thm:infty-main}. In Section \ref{sec:comb} we complete our
motivated proof of the Andrews-Bressoud identities.  Section
\ref{sec:(x,q)dict} is concerned with comparing our closed-form
expressions with various $(x,q)$-series contained in the paper
\cite{CoLoMa} of S. Corteel, J. Lovejoy and O. Mallet --- 
we provide a ``dictionary'' between our closed-form
expressions and appropriate specializations of the formal series
$\tilde{J}$ appearing in \cite{CoLoMa}.  Finally, in Section
\ref{sec:(x,q)forghosts}, we ``reverse-engineer'' our method and
appropriately replace certain powers of $q$ with $x$ in our
closed-form expressions to arrive at (new) $(x,q)$-expressions that govern
the ghosts.

Just as in \cite{LZ} and \cite{CKLMQRS}, throughout this paper we
treat power series as purely formal series rather than as convergent
series (in suitable domains) in complex variables.

\section{The setting}
\label{sec:setting}

First we recall the standard notation
\begin{align}
(a)_n &= \prod_{s=0}^{n-1} (1-aq^s)\label{dfn:a_n}\\
(q)_n &= \prod_{s=1}^{n} (1-q^s)\label{dfn:q_n}\\
(q)_\infty &= \prod_{s \ge 1} (1-q^s)\label{dfn:q_infty},
\end{align}
where $a$ is any formal variable or complex number.

Fix an integer $k\ge 2$. For each $i\in\left\{1,\dots,k\right\}$, define
\begin{align}
\label{def:B_i0thshelf}
B_i=\dfrac{\prod_{m\ge 1}(1-q^{2km})
(1-q^{2km-k+i-1})(1-q^{2km-k-i+1})}{(q)_\infty}.
\end{align}
Recalling the Jacobi triple product identity, 
\begin{eqnarray*}
\sum_{\lambda \in \mathbb{Z}} (-1)^\lambda z^\lambda q^{\lambda^2} 
& = &
\prod_{m \geq 0} (1-q^{2m+2})(1- z q^{2m+1})(1-z^{-1} q^{2m+1}),
\end{eqnarray*}
and replacing $q$ by $q^k$ and $z$ by
$q^{i-1}$, we have
\begin{align}
B_i 
&= \frac{\sum_{n \in \mathbb{Z}} 
(-1)^n q^{n(i-1)} q^{kn^2}}{(q)_\infty}\nonumber\\
&= \frac{\sum_{n \ge 0} (-1)^n q^{kn^2+n(i-1)}
(1-q^{(k-i+1)(2n+1)})}{(q)_\infty}.
\label{eqn:B_iexplicit}
\end{align}

We add to our list the next $k-1$ ``official'' series $B_l$ for
$l\in\left\{k+1, k+2, \dots, 2k-1\right\}$,
along with what we shall call ``ghost series'' counterparts $\B_l$
for $l\in\left\{2,3,\dots, k\right\}$, via the relations
\begin{equation}\label{edgematching}
B_{k+1} = \frac{B_{k-1} - \B_k }{q} = \B_k ,
\end{equation} and
\begin{equation}\label{shelf1}
B_{k+h} =  
\frac{B_{k - h } - \B_{ k-h+1 } } { q^{ h } }
= \frac{\B_{ k - h + 1 } - B_{ k - h + 2} }{ q^{ h - 1} }  
\end{equation} for $h\in\left\{2,3,\dots, k-1\right\}$.

\begin{rem}\label{rem:1stghost}
Note that we have not defined $\B_1$. 
Indeed, $\B_1$ will not be necessary in proving our main theorem.
However, once we get the combinatorial interpretation of
the ghosts in Theorem \ref{thm:comb-main-ghosts}, we
will be able to attach meaning to $\B_1$; see Remark
\ref{rem:1stghostmeaning}.
\end{rem}

\begin{rem}\label{rem:relations}
The right-hand equalities in equations \eqref{edgematching}
and \eqref{shelf1} serve to define the zeroth-shelf
ghosts. After these ghosts are defined, 
\eqref{edgematching} and \eqref{shelf1} give the
definition of the official series on the first shelf.
\end{rem}

Continuing similarly, for $j\geq 0$
we define ghosts and official series on all shelves by
\begin{equation}\label{def:higherjedge}
B_{(k-1)(j+1)+2} = \frac{B_{(k-1)j+k-1} - \B_{ (k-1)j+k} }{q^{j+1}}
 = \B_{(k-1)j + k},
\end{equation} and
\begin{equation}\label{def:higherj}
B_{(k-1)(j+1)+i} =  
\frac{B_{(k-1)j + k-i+1 } - \B_{ (k-1)j +k-i+2 } } { q^{ (j+1) (i-1) } }
= \frac{\B_{ (k-1)j + k-i+2 } - B_{ (k-1)j+k-i+3 } }{ q^{ (j+1)( i - 2) } }  
\end{equation} 
for $i\in\left\{3,4,\dots, k\right\}$,
with Remark \ref{rem:relations} extending to all $j$.

\begin{rem}\label{rem:crux}
As previewed in the Introduction, one clearly sees 
that ghost series yield the ``correctly'' shaped relations 
\eqref{def:higherjedge} and \eqref{def:higherj}, i.e., 
these relations involve pure powers of $q$ dividing
the subtractions of the appropriate series.
\end{rem}

As mentioned above in Remark \ref{rem:relations}, 
using the right-hand equalities in equations \eqref{def:higherjedge} and 
\eqref{def:higherj}, one can define the ghosts explicitly for $j\geq 0$:
\begin{align}
\B_{(k-1)j+i}&= \frac{B_{(k-1)j+i-1}+q^{j+1}B_{(k-1)j+i+1}}{1+q^{j+1}}
\qquad \text{for } i\in\{2,\dots,k-1\}\label{def:ghost_others},\\
\B_{(k-1)j+k}&=\frac{B_{(k-1)j+k-1}}{1+q^{j+1}} \label{def:ghost_k}.
\end{align}

\begin{rem}\label{rem:preview_closed_form}
In Theorem \ref{thm:main}, we will provide closed-form expressions for the
various official and ghost series. In order to obtain the closed-form
expressions for the official series, we shall use the left-hand equalities
in \eqref{def:higherjedge} and \eqref{def:higherj}, and for the ghosts, 
we will use \eqref{def:ghost_others} and \eqref{def:ghost_k}.
\end{rem}

\begin{rem}
The case $k=2$ corresponds to a pair of Euler identities. 
In this case, several relations, namely, \eqref{shelf1}, \eqref{def:higherj}
and \eqref{def:ghost_others}, become vacuous, and only the ``edge-matching'' 
relations, namely, \eqref{edgematching}, \eqref{def:higherjedge} and 
\eqref{def:ghost_k}, survive. 
As a result of this, the official series on the various shelves 
also manifest themselves as ghost series.
\end{rem}

Now we turn to our notation regarding partitions.
As usual, a \emph{partition} $\pi$ \emph{of} $n$ is a finite nonincreasing
sequence of positive integers $(\pi_1, \pi_2, \dots, \pi_t)$ such that
$\pi_1 + \pi_2 + \cdots + \pi_t = n$.  Each $\pi_s$ is called a
\emph{part} of $\pi$.  The \emph{length} $\ell(\pi)$ of $\pi$ is the
number of parts in $\pi$.  For any positive integer $p$, the
\emph{multiplicity} of $p$ in $\pi$, denoted $m_p(\pi)$, is the number
of parts in $\pi$ equal to $p$.

We will prove two analogues of~\cite[Theorem 4.1]{LZ} --- for the
official series (recovering the Andrews-Bressoud identities as a
special case) and for the ghosts: For $r = (k-1)j +
i$, with  $j\geq 0$ and  $1\leq i \leq k$,
\[ B_r = \sum_{n\geq 0} b_{k,r}(n) q^n, \]
where $b_{k,r}(n)$ denotes the number of partitions 
$\pi = (\pi_1, \pi_2, \dots, \pi_{\ell(\pi)})$ of $n$ such that 
\begin{enumerate}
\item $\pi_{t} - \pi_{t+k-1} \geq 2$,
\item $\pi_{\ell(\pi)} \geq j+1$,
\item $m_{j+1}(\pi) \leq k-i$,
\item $\pi_t - \pi_{t+k-2} \leq 1$ only if 
$\pi_t + \pi_{t+1} + \cdots + \pi_{t+k-2} \equiv 
r+k \pmod{2}$.
\end{enumerate}
Moreover,  for $r = (k-1)j + i$, with $j\geq 0$ and $2\leq i \leq k$,
\[ \B_r = \sum_{n\geq 0} \tilde{b}_{k,r}(n) q^n, \]
where $\tilde{b}_{k,r}(n)$ denotes the number of partitions 
$\pi = (\pi_1, \pi_2, \dots, \pi_{\ell(\pi)})$ of $n$ such that 
\begin{enumerate}
\item $\pi_{t} - \pi_{t+k-1} \geq 2$,
\item $\pi_{\ell(\pi)} \geq j+1$,
\item $m_{j+1}(\pi) \leq k-i$,
\item $\pi_t - \pi_{t+k-2} \leq 1$ only if 
$\pi_t + \pi_{t+1} + \cdots + \pi_{t+k-2} \equiv 
r+k+1 \pmod{2}$.
\end{enumerate}
Note that for the ghosts, the parity condition is the ``wrong'' one;
this is the key idea.

\section{The zeroth shelf}
\label{sec:0ghosts}
In this section we derive closed-form expressions for the ghosts on the
zeroth shelf.  These closed-form expressions will be used
later as the base case of the induction in the proof of Theorem
\ref{thm:main}.  Recall equation \eqref{eqn:B_iexplicit}: For
$i\in\left\{1,\dots,k\right\}$,
\begin{align}
B_i &= \ds\sum\limits_{n\geq 0}(-1)^n
\frac{q^{kn^2+n(i-1)}(1-q^{(k-i+1)(2n+1)})  }{(q)_\infty}.
\end{align} 

Solving \eqref{edgematching} for $\B_k$,
we get that
\begin{equation}
\B_k=\frac{B_{k-1}}{1+q}.\label{eqn:ghosts_k}
\end{equation}
Next, solving \eqref{shelf1} for $\B_{k-i+1}$,
\begin{equation}\label{eqn:ghosts_k-i+1}
\B_{k-i+1}=\ds\frac{B_{k-i} + qB_{k-i+2}}{1+q}
\end{equation}
for $i\in\left\{2,3,\dots,k-1\right\}$.  Equations \eqref{eqn:ghosts_k} and
\eqref{eqn:ghosts_k-i+1} express the ghosts on the zeroth shelf
in terms of the official series on the zeroth shelf.
Re-indexing \eqref{eqn:ghosts_k-i+1}, we get,
for $i\in\left\{2,3,\dots,k-1\right\}$,
\begin{equation}\label{ghosts_i}
\B_{i}=\ds\frac{B_{i-1} + qB_{i+1}}{1+q}.
\end{equation}

{}From \eqref{eqn:B_iexplicit} (as recalled above), we gather that for
$i\in\left\{2,3,\dots,k-1\right\}$,
\begin{align}
B_{i-1} &= \ds\sum\limits_{n\geq
0}(-1)^n\frac{q^{kn^2+n(i-2)}(1-q^{(k-i+2)(2n+1)}) }{(q)_\infty},\\
B_{i+1} &= \ds\sum\limits_{n\geq
0}(-1)^n\frac{q^{kn^2+ni}(1-q^{(k-i)(2n+1)}) }{(q)_\infty}.
\end{align}
Hence
\begin{align*}
(q)_\infty( B_{i-1} + q B_{i+1} )\\
&=\ds\sum\limits_{n\geq 0} (-1)^n
\left(q^{kn^2+n(i-2)}\left(1-q^{(k-i+2)(2n+1)}\right)
+ {q^{kn^2+ni+1}\left(1-q^{(k-i)(2n+1)}\right)  }\right)  \\
&=\ds\sum\limits_{n\geq 0} (-1)^n q^{kn^2+n(i-2)}\left(1-q^{(k-i+2)(2n+1)}
+ {q^{2n+1}\left(1-q^{(k-i)(2n+1)}\right)  }\right)  \\
&=\ds\sum\limits_{n\geq 0} (-1)^n
q^{kn^2+n(i-2)}\left(1-q^{(k-i+1)(2n+1)+2n+1}+
{q^{2n+1}-q^{(k-i+1)(2n+1)} }\right) \\
&=\ds\sum\limits_{n\geq 0} (-1)^n
q^{kn^2+n(i-2)}\left(1-q^{(k-i+1)(2n+1)}\right)(1+q^{2n+1}).
\end{align*}
Therefore, for $i\in\left\{2,3,\dots,k-1\right\}$,
{}from~\eqref{ghosts_i},
\begin{equation}\label{ghostsformula}
\B_i = \ds\frac{\ds{B_{i-1} + q B_{i+1}}}{1+q}= \ds\sum\limits_{n\geq
0}(-1)^n\frac{q^{kn^2+n(i-2)}(1-q^{(k-i+1)(2n+1)})(1+q^{2n+1})}
{(q)_\infty(1+q)}.
\end{equation}

Now we find $\B_k$ in the same way:
\[
\B_k=\frac{B_{k-1}}{1+q}=\ds\sum\limits_{n\geq
0}(-1)^n\frac{q^{kn^2+n(k-2)}(1-q^{2(2n+1)})}{(q)_\infty(1+q)},
\]
and it is easy to see that this is the same expression we get by
setting $i=k$ in \eqref{ghostsformula}.  Therefore,
\eqref{ghostsformula} holds for $i\in\left\{2,3,\dots,k\right\}$.


\section{Determination of closed-form expressions for higher shelves and ghosts}
\label{sec:closedforms}

The aim of this section is to provide explicit closed-form expressions
for all of the ghosts and the official series. 
These closed-form expressions will enable us to provide a proof of 
our Empirical Hypothesis in Section \ref{sec:EH}.

\begin{thm}\label{thm:main}
For $j\ge 0$ and $i\in\left\{1,2,\dots,k\right\}$,
\[
B_{(k-1)j+i} \in \mathbb{C}[[q]]
\]
and in fact,
\begin{align}
\label{B_general}&B_{(k-1)j+i}\\
& =\ds\sum\limits_{n\geq
0}(-1)^n\frac{q^{kn^2+((k-1)j+i-1)n}(1-q^{(2n+j+1)(k-i+1)})
(1-q^{2(n+1)})\cdots(1-q^{2(n+j)})}{(q)_\infty(1+q)\cdots(1+q^j)}.
\nonumber
\end{align}
Denoting the right-hand side of \eqref{B_general} by $RHS_{j,i}$,
we have that for each $j\geq 1$, the two expressions for
$B_{(k-1)j+i}$ match:
\begin{equation}
RHS_{j-1,k} = RHS_{j,1} .
\label{edgematching_general}
\end{equation}
Moreover,
for $j\ge 0$ and $i\in\left\{2,\dots,k\right\}$,
\[
\B_{(k-1)j+i} \in \mathbb{C}[[q]]
\]
and in fact,
\begin{align}
\label{Bt_general}& \B_{(k-1)j+i}\\
& =\ds\sum\limits_{n\geq 0}(-1)^n
\frac{
q^{kn^2+((k-1)j+i-2)n}(1-q^{2(n+1)})\cdots(1-q^{2(n+j)})
(1+q^{2n+j+1})(1-q^{(2n+j+1)(k-i+1)})
}
{(q)_\infty(1+q)\cdots(1+q^{j+1})}.
\nonumber
\end{align}
\end{thm}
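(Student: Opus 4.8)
The plan is to prove all of Theorem \ref{thm:main} --- membership of $B_{(k-1)j+i}$ and $\B_{(k-1)j+i}$ in $\mathbb{C}[[q]]$, the closed forms \eqref{B_general} and \eqref{Bt_general}, and the compatibility \eqref{edgematching_general} --- simultaneously, by induction on the shelf index $j$. The base case $j=0$ is immediate: for $j=0$ the products $(1-q^{2(n+1)})\cdots(1-q^{2(n+j)})$ and $(1+q)\cdots(1+q^j)$ are empty, so the right-hand side of \eqref{B_general} is literally \eqref{eqn:B_iexplicit}, and setting $j=0$ in \eqref{Bt_general} gives exactly the expression \eqref{ghostsformula} for $\B_i$ established in Section \ref{sec:0ghosts}. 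In either formula $(q)_\infty$ and each $1+q^l$ are units in $\mathbb{C}[[q]]$ and the $q$-valuation $kn^2+((k-1)j+i-1)n$ of the $n$-th summand tends to $\infty$ with $n$, so $RHS_{j,i}$ (and its ghost analogue) is automatically a well-defined element of $\mathbb{C}[[q]]$; the content of the theorem is that the recursively defined series coincide with these closed forms.

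I would treat \eqref{edgematching_general} first, since it concerns only the closed forms. Both $RHS_{j-1,k}$ and $RHS_{j,1}$ carry the exponent $kn^2+(k-1)jn$, and after clearing over the common denominator $(q)_\infty(1+q)\cdots(1+q^j)$, the identity to prove becomes
\[
\sum_{n\ge 0}(-1)^n q^{kn^2+(k-1)jn}(1-q^{2n+j})(1+q^j)\prod_{l=1}^{j-1}(1-q^{2(n+l)})=\sum_{n\ge 0}(-1)^n q^{kn^2+(k-1)jn}(1-q^{k(2n+j+1)})\prod_{l=1}^{j}(1-q^{2(n+l)}).
\]
The elementary identity $(1-q^{2n+j})(1+q^j)=(1-q^{2(n+j)})+q^j(1-q^{2n})$ rewrites the left-hand summand as $\prod_{l=1}^{j}(1-q^{2(n+l)})+q^j\prod_{l=0}^{j-1}(1-q^{2(n+l)})$; the first term matches the ``$1$'' part of the right-hand side, and the $n=0$ term of the second vanishes, so the substitution $n\mapsto n+1$ is legitimate there and a direct check of exponents (both constant shifts equal $k(j+1)$) shows it reproduces the $-q^{k(2n+j+1)}$ part. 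This settles \eqref{edgematching_general}.

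For the inductive step, assume \eqref{B_general} on shelf $j$ for $i\in\{1,\dots,k\}$ and \eqref{Bt_general} on shelf $j$ for $i\in\{2,\dots,k\}$. For $i\in\{2,\dots,k\}$ I would use the left-hand equality of \eqref{def:higherjedge} or \eqref{def:higherj} --- uniformly $B_{(k-1)(j+1)+i}=\big(B_{(k-1)j+k-i+1}-\B_{(k-1)j+k-i+2}\big)/q^{(j+1)(i-1)}$ --- and substitute the induction hypothesis. After clearing over $(q)_\infty(1+q)\cdots(1+q^{j+1})$, both summands share the exponent $kn^2+((k-1)j+k-i)n$ and the factor $\prod_{l=1}^{j}(1-q^{2(n+l)})$, so the cleared numerator of $B_{(k-1)j+k-i+1}-\B_{(k-1)j+k-i+2}$ is $\sum_n(-1)^n q^{kn^2+((k-1)j+k-i)n}\prod_{l=1}^j(1-q^{2(n+l)})\,E_n$ with the bracket
\[
E_n=(1+q^{j+1})(1-q^{(2n+j+1)i})-(1+q^{2n+j+1})(1-q^{(2n+j+1)(i-1)})=q^{j+1}(1-q^{2n})+q^{(2n+j+1)(i-1)}(1-q^{2(n+j+1)}).
\]
Absorbing $(1-q^{2(n+j+1)})$ into the product turns the $q^{(2n+j+1)(i-1)}(1-q^{2(n+j+1)})$ term into $q^{(j+1)(i-1)}$ times the ``$1$'' part of the numerator of $RHS_{j+1,i}$; the $q^{j+1}(1-q^{2n})$ term has a vanishing $n=0$ summand, so after $n\mapsto n+1$ and a routine exponent check it equals $-q^{(j+1)(i-1)}$ times the ``$q^{(2n+j+2)(k-i+1)}$'' part. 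Hence the cleared numerator is divisible by $q^{(j+1)(i-1)}$, which yields $B_{(k-1)(j+1)+i}\in\mathbb{C}[[q]]$, and the quotient equals $RHS_{j+1,i}$. The remaining index $i=1$ on shelf $j+1$ is free: $B_{(k-1)(j+1)+1}=B_{(k-1)j+k}=RHS_{j,k}=RHS_{j+1,1}$ by the induction hypothesis and \eqref{edgematching_general}.

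Finally, with \eqref{B_general} in hand on shelf $j+1$, I would obtain the ghosts on that shelf by plugging it into \eqref{def:ghost_others} and \eqref{def:ghost_k} (with $j$ there replaced by $j+1$), mimicking Section \ref{sec:0ghosts}: combining the two official series over the common denominator and using $(1-q^{(2n+j+2)(k-i+2)})+q^{2n+j+2}(1-q^{(2n+j+2)(k-i)})=(1+q^{2n+j+2})(1-q^{(2n+j+2)(k-i+1)})$ (together with its $i=k$ degeneration) turns the result termwise into the right-hand side of \eqref{Bt_general} on shelf $j+1$; no index shift is needed here, and membership in $\mathbb{C}[[q]]$ follows since $1+q^{j+2}$ is a unit. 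This closes the induction. The genuinely delicate ingredients are the two bracket identities and, above all, the exponent bookkeeping across the shift $n\mapsto n+1$ in the official-series step --- it is exactly there that the subtracted factor $q^{(2n+j+2)(k-i+1)}$ of the closed form gets produced --- and I expect that to be the main obstacle; everything else is mechanical.
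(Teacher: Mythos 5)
Your proposal is correct and follows essentially the same route as the paper: induction on the shelf $j$ with base case from Section \ref{sec:0ghosts}, the identical bracket identity $E_n=q^{j+1}(1-q^{2n})+q^{(2n+j+1)(i-1)}(1-q^{2(n+j+1)})$ with the vanishing $n=0$ term and the shift $n\mapsto n+1$ for the official series, and the same termwise combination for the ghosts. The only (cosmetic) difference is that you prove the edge-matching \eqref{edgematching_general} directly as a standalone identity about the closed forms, whereas the paper obtains it as the $i=1$ degeneration of the main inductive computation in \eqref{edgematching_prelim}; the underlying manipulations are the same.
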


\begin{proof}
{}From the previous sections, we see that the conclusions hold if $j=0$. 
We proceed by induction. Assume that the assertions in the theorem are
true for all $i\in\left\{1,2,\dots k\right\}$ for a certain index $j$.

For $i\in\left\{2,\dots,k\right\}$,
\[
B_{(k-1)(j+1)+i}=\ds\frac{B_{(k-1)j+k-i+1}- 
\B_{(k-1)j+k-i+2}}{q^{(j+1)(i-1)}}.
\]
Therefore,
\begin{align}
&\lefteqn{(q)_\infty(1+q)\cdots(1+q^{j+1})q^{(j+1)(i-1)} B_{(k-1)(j+1)+i} }\nonumber\\
= &  
\ds\sum\limits_{n\geq 0}(-1)^n {q^{kn^2+((k-1)j+k-i)n}(1-q^{(2n+j+1)i})
(1-q^{2(n+1)})\cdots(1-q^{2(n+j)})}(1+q^{j+1})\nonumber\\
& \mbox{} - \ds\sum\limits_{n\geq 0}(-1)^n
{q^{kn^2+((k-1)j+k-i)n}(1-q^{2(n+1)})\cdots
(1-q^{2(n+j)})(1+q^{2n+j+1})(1-q^{(2n+j+1)(i-1)})}
\nonumber\\
= & \ds\sum\limits_{n\geq 0}(-1)^n
q^{kn^2+((k-1)j+k-i)n}(1-q^{2(n+1)})\cdots(1-q^{2(n+j)})\nonumber \\
& \cdot\left(1-q^{(2n+j+1)i}+q^{j+1}- q^{(2n+j+1)i + j+1}-1-q^{2n+j+1}  
+ q^{(2n+j+1)(i-1)} +q^{(2n+j+1)i}  \right)\nonumber\\
= & \ds\sum\limits_{n\geq 0}(-1)^n
q^{kn^2+((k-1)j+k-i)n}(1-q^{2(n+1)})\cdots(1-q^{2(n+j)})\nonumber \\
& \cdot\left(q^{j+1}- q^{(2n+j+1)i+ j+1}-q^{2n+j+1}+ q^{(2n+j+1)(i-1)}\right)
\nonumber\\
= & \ds\sum\limits_{n\geq 0}(-1)^n
q^{kn^2+((k-1)j+k-i)n}(1-q^{2(n+1)})\cdots(1-q^{2(n+j)})\nonumber \\
&  \left( q^{j+1}(1-q^{2n}) +  q^{(2n+j+1)(i-1)} (1-q^{2(n+j+1)}) \right)
\nonumber\\
= & \ds\sum\limits_{n\geq 0}(-1)^n
q^{kn^2+((k-1)j+k-i)n+j+1}(1-q^{2n})
(1-q^{2(n+1)})\cdots(1-q^{2(n+j)})\label{before_index_change} \\
&\mbox{} + \ds\sum\limits_{n\geq 0}(-1)^n 
 q^{kn^2+((k-1)j+k-i)n+{(2n+j+1)(i-1)}}
(1-q^{2(n+1)})\cdots (1-q^{2(n+j+1)}).\label{does_not_change}
\end{align}
In \eqref{before_index_change}, the term corresponding to $n=0$ is $0$, 
and hence, 
making the index change $n\mapsto n+1$, we get
\begin{align}
& \ds\sum\limits_{n\geq 0}(-1)^n
q^{kn^2+((k-1)j+k-i)n+j+1}(1-q^{2n})(1-q^{2(n+1)})\cdots(1-q^{2(n+j)})
\nonumber\\
& = \ds\sum\limits_{n\geq 0}(-1)^{n+1}
q^{kn^2+2kn+k+((k-1)j+k-i)(n+1)+j+1}(1-q^{2(n+1)})\cdots(1-q^{2(n+j+1)})
\nonumber\\
& = \ds\sum\limits_{n\geq 0}(-1)^{n+1}
q^{kn^2+((k-1)j+k-i)n + {(2n+j+1)(i-1)}+ (2n+j+2)(k-i+1)}(1-q^{2(n+1)})
\cdots(1-q^{2(n+j+1)}).
\label{first_sum}
\end{align}
Combining \eqref{does_not_change} with \eqref{first_sum}, we arrive at
\begin{align}
& \ds\sum\limits_{n\geq 0}(-1)^n
q^{kn^2+((k-1)j+k-i)n + {(2n+j+1)(i-1)}}(1 - q^{(2n+j+2)(k-i+1)})
(1-q^{2(n+1)})\cdots(1-q^{2(n+j+1)})\nonumber,
\end{align}
which equals
\begin{align}
q^{(j+1)(i-1)}\ds\sum\limits_{n\geq 0}(-1)^n
q^{kn^2+ ((k-1)(j+1) + i-1)n }(1 - q^{(2n+j+2)(k-i+1)})
(1-q^{2(n+1)})\cdots(1-q^{2(n+j+1)})\nonumber.
\end{align}

It is easy to see that the proof above of the equality
\begin{align}
&  
\ds\sum\limits_{n\geq 0}(-1)^n {q^{kn^2+((k-1)j+k-i)n}
(1-q^{(2n+j+1)i})(1-q^{2(n+1)})\cdots(1-q^{2(n+j)})}(1+q^{j+1})\nonumber\\
& \mbox{} - \ds\sum\limits_{n\geq 0}(-1)^n
{q^{kn^2+((k-1)j+k-i)n}(1-q^{2(n+1)})\cdots(1-q^{2(n+j)})
(1+q^{2n+j+1})(1-q^{(2n+j+1)(i-1)})}
\nonumber\\
& = q^{(j+1)(i-1)}\ds\sum\limits_{n\geq 0}(-1)^n
q^{kn^2+ ((k-1)(j+1) + i-1)n }(1 - q^{(2n+j+2)(k-i+1)})
(1-q^{2(n+1)})\cdots(1-q^{2(n+j+1)})
\label{edgematching_prelim}
\end{align}
in fact works even if $i=1$. 
For the $i=1$ case, the second term on the left-hand side of 
\eqref{edgematching_prelim} is equal to 0.
Therefore, dividing through by $(q)_\infty(1+q)\cdots(1+q^{j+1})$ 
precisely gives the edge-matching, i.e.,
$$RHS_{j,k}= RHS_{(j+1),1}.$$

Finally, we use the formulas for $B_{(k-1)(j+1)+i}$ 
along with the edge-matching phenomenon
in order to prove the required formulas for $\B$.
For $i\in\left\{2,\dots,k-1\right\}$, 
$$\B_{(k-1)(j+1)+i}=
\ds\frac{B_{(k-1)(j+1)+i-1} + q^{j+2} B_{(k-1)(j+1)+i+1}}{1+q^{j+2}}.
$$
Therefore,
\begin{align}
(q)_\infty(1+q)&\cdots(1+q^{j+2})\B_{(k-1)(j+1)+i}\nonumber\\
= & \ds\sum\limits_{n\geq 0}(-1)^n
{q^{kn^2+((k-1)(j+1)+i-2)n}(1-q^{(2n+j+2)(k-i+2)})
(1-q^{2(n+1)})\cdots(1-q^{2(n+j+1)})}\nonumber\\
& \mbox{} + \ds\sum\limits_{n\geq 0}(-1)^n
{q^{j+2+kn^2+((k-1)(j+1)+i)n}(1-q^{(2n+j+2)(k-i)})(1-q^{2(n+1)})
\cdots(1-q^{2(n+j+1)})}\nonumber\\
= &\ds\sum\limits_{n\geq 0}(-1)^nq^{kn^2+((k-1)(j+1)+i-2)n}
(1-q^{2(n+1)})\cdots(1-q^{2(n+j+1)})\cdot\nonumber\\
& \mbox{}\cdot
\left(1-q^{(2n+j+2)(k-i+2)}+q^{2n+j+2}-q^{(2n+j+2)(k-i+1)}\right)
\nonumber\\
= &\label{Bt_upto_k-1} 
\ds\sum\limits_{n\geq 0}(-1)^nq^{kn^2+((k-1)(j+1)+i-2)n}
(1-q^{2(n+1)})\cdots(1-q^{2(n+j+1)})\cdot\\
&\cdot(1+q^{2n+j+2})(1-q^{(2n+j+2)(k-i+1)})\nonumber.
\end{align}
For $i=k$,
\[
\B_{(k-1)(j+1)+k}=\ds\frac{B_{(k-1)(j+1)+k-1} }{1+q^{j+2}}.
\]
Therefore,
\begin{align}
(q)_\infty(1+q)&\cdots(1+q^{j+2})\B_{(k-1)(j+1)+k}\nonumber\\
& = \ds\sum\limits_{n\geq 0}(-1)^n
q^{kn^2+((k-1)(j+1)+k-2)n}(1-q^{2(2n+j+2)})(1-q^{2(n+1)})
\cdots(1-q^{2(n+j)})(1-q^{2(n+j+1)})\nonumber,
\end{align}
which equals \eqref{Bt_upto_k-1} with $i=k$.
\end{proof}

\begin{rem}\label{norole}
Note that the factor $(q)_\infty$ played no role whatsoever 
in the proof above, except for the identification of the closed 
forms on the zeroth shelf with the 
products in the Andrews-Bressoud identities.
\end{rem}

\section{The Empirical Hypothesis}
\label{sec:EH}
Using the formula for $B_{(k-1)j+i}$ in Theorem \ref{thm:main},
we see that the $n=0$ term of the sum is
\begin{align*}
\ds \frac{(1-q^{(j+1)(k-i+1)})
(1-q^{2})\cdots(1-q^{2j})}{(q)_\infty(1+q)\cdots(1+q^j)}
& =\ds \frac{(1-q^{(j+1)(k-i+1)})
(1-q)\cdots(1-q^{j})}{(q)_\infty} \\
& =\ds \frac{1-q^{(j+1)(k-i+1)}}{(1-q^{j+1})(1-q^{j+2})\cdots},
\end{align*}
and so
\begin{align*}
B_{(k-1)j+i}
& =\ds \frac{(1-q^{(j+1)(k-i+1)})
(1-q^{2})\cdots(1-q^{2j})}{(q)_\infty(1+q)\cdots(1+q^j)}\\
& \mbox{} + \ds\sum\limits_{n\geq 1}(-1)^n
\frac{q^{kn^2+((k-1)j+i-1)n}(1-q^{(2n+j+1)(k-i+1)})(1-q^{2(n+1)})
\cdots(1-q^{2(n+j)})}{(q)_\infty(1+q)\cdots(1+q^j)}.
\end{align*}
For any $j\ge 0$, $i\in\left\{1,\dots,k\right\}$ and for any $n\ge 1$,
we see that
$$kn^2 +((k-1)j+i-1)n \ge j+2,$$
since $k\ge 2$.
Analyzing the first summand above, we get that
\begin{equation}
B_{(k-1)j+i}=
\begin{cases}
1 + q^{j+1}\gamma_i^{(j+1)}(q) & \text{if }\, 1 \leq i \leq k-1 \\
1 + q^{j+2}\gamma_k^{(j+2)}(q) & \text{if }\, i=k
\end{cases}
\label{eqn:EHforB}
\end{equation}
where 
\begin{equation}\gamma_i^{(j+1)}(q)\in \mathbb{C}[[q]].
\label{eqn:EHforB-gamma}
\end{equation}

Similarly,
\begin{align*}
\B&_{(k-1)j+i}  \\
& = \frac{ 1-q^{(j+1)(k-i+1)} }
{(1-q^{j+1})(1-q^{j+2})\cdots} \\
& \mbox{} + \ds\sum\limits_{n\geq 1}(-1)^n
\frac{ q^{kn^2+((k-1)j+i-2)n}(1-q^{2(n+1)})\cdots
(1-q^{2(n+j)})(1+q^{2n+j+1})(1-q^{(2n+j+1)(k-i+1)}) }
{(q)_\infty(1+q)\cdots(1+q^{j+1})}.
\end{align*}
Hence
\begin{equation}
\B_{(k-1)j+i}=\begin{cases}
1+ q^{j+1}\tilde\gamma_i^{(j+1)}(q) & \text{if }\, 2\leq i\leq k-1\\
1+ q^{j+2}\tilde\gamma_k^{(j+2)}(q) & \text{if }\, i=k
\end{cases}
\label{eqn:EHforBt}
\end{equation}
where 
\begin{equation}\tilde\gamma_i^{(j+1)}(q)\in \mathbb{C}[[q]].
\label{eqn:EHforBt-gamma}
\end{equation}
Equations \eqref{eqn:EHforB}--\eqref{eqn:EHforBt-gamma}
form our Empirical Hypothesis.

\begin{rem}\label{rem:SEH}
Carefully analyzing the various $\gamma$'s and $\tilde\gamma$'s appearing
above, we can make our Empirical Hypothesis stronger, as follows:
\begin{equation}
B_{(k-1)j+i}=\begin{cases}
1 + q^{j+1}+\cdots & \text{if }\, 1 \leq i \leq k-1 \\
1 + q^{j+2}+\cdots & \text{if }\, i=k
\end{cases}
\label{eqn:SEHforB}
\end{equation}
and 
\begin{equation}
\B_{(k-1)j+i}=\begin{cases}
1 + q^{j+1}+\cdots & \text{if }\, 2 \leq i \leq k-1 \\
1 + q^{j+2}+\cdots & \text{if }\, i=k.
\end{cases}
\label{eqn:SEHforBt}
\end{equation}
Equations \eqref{eqn:SEHforB} and \eqref{eqn:SEHforBt} will be 
collectively called the Strong Empirical Hypothesis.
\end{rem}

\begin{rem}
\label{rem:UniquenessofQinfty}
The way we have deduced our Empirical Hypothesis highlights the 
importance of the factor $(q)_\infty$ appearing in the 
denominator of our closed-form expressions --- it is the unique such factor 
that can yield the Empirical Hypothesis.
Recall that by contrast, this factor played no significant role in the proof of 
Theorem \ref{thm:main} (see Remark \ref{norole}).
\end{rem}

\begin{rem}\label{rem:WEH}
As will be clear from the proofs of our main theorems, 
Theorem \ref{thm:infty-main} and 
Theorem \ref{thm:comb-main}, we will only need the information 
related to the series  $B_r$ (and not the series $\B_r$)
from our Empirical Hypotheses.
Moreover, the only form of the Empirical Hypothesis that is 
logically needed to prove the Andrews-Bressoud identities is 
a weaker one, which states that for any positive integer $r$ 
there exists a positive integer
$f(r)$ with
\begin{equation}
B_r \in 1 + q^{f(r)}\mathbb{C}[[q]],
\end{equation}
such that 
\begin{equation}
\lim_{r\rightarrow\infty}f(r) = \infty.
\label{eqn:WEH}
\end{equation}
We will refer to this form as the Weak Empirical Hypothesis.
\end{rem}

\section{Matrix interpretation}\label{sec:matrix}

The aim of this section is to give a matrix formulation of the 
shelf picture obtained so far. Using this formulation, we
present Theorem \ref{thm:infty-main}, which is
one form of our main theorem.

Eliminating the $\B_r$'s from equations 
\eqref{edgematching} -- \eqref{def:higherj}, 
we arrive at the following recursions:
\begin{prop} For $j\ge 0$,
\begin{align}
B_{(k-1)(j+1)+1}&=B_{(k-1)j+k} 
\label{eliminateBt-edgematching}\\
\ds B_{(k-1)(j+1)+2}&= \frac{B_{(k-1)j+k-1}}{1+q^{j+1}}
\label{eliminateBt-2}\\
\ds B_{(k-1)(j+1)+i}&= \frac{B_{(k-1)j+k-i+1}-
B_{(k-1)j+k-i+3}}{q^{(j+1)(i-2)}(1+q^{j+1})}\,\,\,\text{for }\, 
i \in\left\{3,\dots,k\right\}.
\label{eliminateBt-general}
\end{align}
\end{prop}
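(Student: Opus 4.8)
The plan is to eliminate the ghost series $\B_r$ from the defining relations by substituting the explicit formulas \eqref{def:ghost_others} and \eqref{def:ghost_k} into the left-hand equalities of \eqref{def:higherjedge} and \eqref{def:higherj}. Concretely, the relations \eqref{def:ghost_others} and \eqref{def:ghost_k} already express every ghost on shelf $j$ purely in terms of the official series on shelf $j$, so the strategy is: take each official-series recursion that passes from shelf $j$ to shelf $j+1$, and replace every occurrence of a $\B$ by the corresponding combination of $B$'s.

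First I would handle \eqref{eliminateBt-edgematching}. The left-hand equality of \eqref{def:higherjedge} together with the right-hand equality gives $B_{(k-1)(j+1)+1} = \B_{(k-1)j+k}$, and by \eqref{def:ghost_k} we have $\B_{(k-1)j+k} = B_{(k-1)j+k-1}/(1+q^{j+1})$. But the relation \eqref{def:higherjedge} also tells us directly that $B_{(k-1)(j+1)+2} = \B_{(k-1)j+k}$, so in fact $B_{(k-1)(j+1)+1} = B_{(k-1)(j+1)+2} = B_{(k-1)j+k-1}/(1+q^{j+1})$; wait---I must be careful here, since \eqref{def:higherjedge} says $B_{(k-1)(j+1)+2} = \B_{(k-1)j+k}$ and separately the left equality defines $B_{(k-1)(j+1)+2}$ via a division. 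The clean statement \eqref{eliminateBt-edgematching} should instead come from combining the definition of $B_{(k-1)(j+1)+1}$ on shelf $j+1$ (which by convention is a copy of $B_{(k-1)j+k}$, the top of shelf $j$, via the edge-matching identity \eqref{edgematching_general} of Theorem \ref{thm:main}) with nothing else. So \eqref{eliminateBt-edgematching} is really the edge-matching identity, and \eqref{eliminateBt-2} follows from \eqref{def:higherjedge} combined with \eqref{def:ghost_k}: indeed $B_{(k-1)(j+1)+2} = \B_{(k-1)j+k} = B_{(k-1)j+k-1}/(1+q^{j+1})$.

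Next I would derive \eqref{eliminateBt-general}. For $i \in \{3,\dots,k\}$ the left-hand equality of \eqref{def:higherj} reads $B_{(k-1)(j+1)+i} = (B_{(k-1)j+k-i+1} - \B_{(k-1)j+k-i+2})/q^{(j+1)(i-1)}$. The index $k-i+2$ lies in $\{2,\dots,k-1\}$ when $i\in\{3,\dots,k\}$, so \eqref{def:ghost_others} applies with ``$i$'' there equal to $k-i+2$, giving $\B_{(k-1)j+k-i+2} = (B_{(k-1)j+k-i+1} + q^{j+1}B_{(k-1)j+k-i+3})/(1+q^{j+1})$. Substituting and simplifying the numerator: $B_{(k-1)j+k-i+1} - \B_{(k-1)j+k-i+2} = B_{(k-1)j+k-i+1} - \frac{B_{(k-1)j+k-i+1} + q^{j+1}B_{(k-1)j+k-i+3}}{1+q^{j+1}} = \frac{q^{j+1}(B_{(k-1)j+k-i+1} - B_{(k-1)j+k-i+3})}{1+q^{j+1}}$. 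Dividing by $q^{(j+1)(i-1)}$ and absorbing one factor of $q^{j+1}$ into the denominator exponent gives precisely $\frac{B_{(k-1)j+k-i+1} - B_{(k-1)j+k-i+3}}{q^{(j+1)(i-2)}(1+q^{j+1})}$, which is \eqref{eliminateBt-general}.

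The computations are entirely routine algebraic manipulations, so I do not anticipate a genuine obstacle; the only point requiring care is bookkeeping of indices---matching the running index in \eqref{def:ghost_others}--\eqref{def:ghost_k} (which is stated in terms of a local ``$i$'' ranging over $\{2,\dots,k-1\}$ or equal to $k$) against the shifted indices $k-i+2$ appearing in \eqref{def:higherj}, and verifying the boundary case $i=k$ (where $k-i+2 = 2$ still lies in the range covered by \eqref{def:ghost_others}, so no separate argument is needed). One should also note that the well-definedness of all series as elements of $\mathbb{C}[[q]]$ is already guaranteed by Theorem \ref{thm:main}, so the divisions by $q^{(j+1)(i-2)}(1+q^{j+1})$ and by $1+q^{j+1}$ in the statement are legitimate in the ring of formal power series (the factor $1+q^{j+1}$ being a unit).
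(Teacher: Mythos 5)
Your proposal is correct and matches the paper's (implicitly stated) derivation: the proposition is obtained precisely by substituting \eqref{def:ghost_others} and \eqref{def:ghost_k} into the relations \eqref{def:higherjedge} and \eqref{def:higherj} to eliminate the ghosts, and your algebra for \eqref{eliminateBt-2} and \eqref{eliminateBt-general} (including the index check that $k-i+2\in\{2,\dots,k-1\}$ for $i\in\{3,\dots,k\}$) is right. The only simplification worth noting is that \eqref{eliminateBt-edgematching} needs no appeal to Theorem \ref{thm:main}: since $(k-1)(j+1)+1=(k-1)j+k$ as integers, the two sides are literally the same series $B_r$ in the single-index notation, which is all the "edge-matching" asserts here.
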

Note that equation \eqref{eliminateBt-edgematching} gives the edge-matching.
Collectively, equations 
\eqref{eliminateBt-edgematching}--\eqref{eliminateBt-general} 
provide recursions defining the  
higher-shelf $B_r$'s in terms of only lower-shelf $B_r$'s.
Now we reverse this procedure. We can rewrite 
\eqref{eliminateBt-edgematching}--\eqref{eliminateBt-general} as follows:
\begin{align}
B_{(k-1)j+k}&=B_{(k-1)(j+1)+1}\label{recursions-em}\\
\ds {B_{(k-1)j+k-1}} &= B_{(k-1)(j+1)+2}{(1+q^{j+1})}
\label{recursions-2}\\
B_{(k-1)j+i} &= B_{(k-1)(j+1)+k-i+1}{q^{(j+1)(k-i-1)}(1+q^{j+1})}
+B_{(k-1)j+i+2}\,\,\,\text{for }\,i \in\left\{1,\dots,k-2\right\}.
\label{recursions-gen}
\end{align}

Fix an integer
\[
J\ge 0.
\]
This will denote a ``starting'' shelf, which need not be the zeroth shelf.
Using \eqref{recursions-em}--\eqref{recursions-gen}, 
for any $j\ge J$ and $i \in\left\{1,\dots,k\right\}$,  we can write
\begin{align}\label{Bi_h_ijl}
B_{(k-1)J+i} & = 
{}_i^Jh^{(j)}_1 B_{(k-1) j +1} + \cdots + {}_i^Jh^{(j)}_k B_{(k-1) j + k}.
\end{align}
{}From the form of \eqref{recursions-em}--\eqref{recursions-gen}, 
we see that the coefficients ${}_i^Jh^{(j)}_l$ 
are polynomials in $q$ with nonnegative integral coefficients,
and moreover, it is not hard to see that 
\begin{align}
k \equiv 1 \modulo \text{ implies that } 
&{}_i^Jh^{(j)}_l=0 \text{ for } l\not\equiv i\modulo, \nonumber \\
k \equiv 0 \modulo \text{ implies that } 
&{}_i^Jh^{(j)}_l=0 \text{ for } j-J+l\not\equiv i\modulo \label{h_is_0}.
\end{align}

For each $j\ge J$, the coefficients ${}_i^Jh^{(j)}_l$ 
can be assembled to form a $k\times k$ matrix ${}^J{\bf h}^{(j)}$:
\begin{equation}\label{hmatrix}
{}^J{\bf h}^{(j)} = 
\left[\begin{matrix}
{}_1^J{\bf h}^{(j)}\\
\vdots\\
{}_k^J{\bf h}^{(j)}
\end{matrix}
\right]
= 
\left[\begin{matrix}
{}_1^J{h}^{(j)}_1 & \cdots & {}_1^J{h}^{(j)}_k\\
\vdots & \ddots & \vdots\\
{}_k^J{h}^{(j)}_1 & \cdots & {}_k^J{h}^{(j)}_k
\end{matrix}\right].
\end{equation}
The row vectors of ${}^J{\bf h}^{(j)}$ are
\begin{equation}\label{boldh}
{}_i^J{\bf h}^{(j)} = [{}_i^Jh^{(j)}_1,\dots,{}_i^Jh^{(j)}_k].
\end{equation}
For $j=J$, 
\begin{equation}\label{boldh0}
 {}_i^J{\bf h}^{(J)} = [0,\dots,0,1,0,\dots,0],
\end{equation}
where the $1$ is in the $i^{\text{th}}$ position,
and therefore,
\begin{align}\label{Hj=JisIdentity}
{}^J{\bf h}^{(J)} = \text{I}_{k\times k}.
\end{align}
Now, independently of the left subscripts ${}_i$ and superscripts ${}^J$, 
the ${}_i^Jh^{(j)}_l$ satisfy the same recursions with respect to $j$, 
as given in the following proposition:
\begin{prop}\label{h_rec}

\begin{enumerate}
\item If $k$ is odd,
\begin{align}
{}_i^Jh^{(j+1)}_{1} &= ({}_i^Jh^{(j)}_1+{}_i^Jh^{(j)}_3+\cdots + 
{}_i^Jh^{(j)}_{k-2} + {}_i^Jh^{(j)}_k) \nonumber\\
{}_i^Jh^{(j+1)}_{3} &= ({}_i^Jh^{(j)}_1+{}_i^Jh^{(j)}_3+\cdots + 
{}_i^Jh^{(j)}_{k-2})(1+q^{j+1})q^{j+1} \nonumber\\
&\vdots&\label{koddlodd}\\
{}_i^Jh^{(j+1)}_{k-2} &= ({}_i^Jh^{(j)}_1+ {}_i^Jh^{(j)}_3)
(1+q^{j+1})q^{(k-4)(j+1)} \nonumber\\
{}_i^Jh^{(j+1)}_{k} &= {}_i^Jh^{(j)}_1(1+q^{j+1})q^{(k-2)(j+1)} \nonumber
\end{align}
and
\begin{align}
{}_i^Jh^{(j+1)}_{2} &= ({}_i^Jh^{(j)}_2+{}_i^Jh^{(j)}_4+\cdots + 
{}_i^Jh^{(j)}_{k-3}+ {}_i^Jh^{(j)}_{k-1})(1+q^{j+1}) \nonumber\\
{}_i^Jh^{(j+1)}_{4} &= ({}_i^Jh^{(j)}_2+{}_i^Jh^{(j)}_4+\cdots + 
{}_i^Jh^{(j)}_{k-3})(1+q^{j+1})q^{2(j+1)} \nonumber\\
&\vdots& \label{koddleven}\\
{}_i^Jh^{(j+1)}_{k-3} &= ({}_i^Jh^{(j)}_2+ {}_i^Jh^{(j)}_4)
(1+q^{j+1})q^{(k-5)(j+1)} \nonumber\\
{}_i^Jh^{(j+1)}_{k-1} &= {}_i^Jh^{(j)}_2(1+q^{j+1})q^{(k-3)(j+1)}.\nonumber
\end{align}
 
\item If $k$ is even,
\begin{align}
{}_i^Jh^{(j+1)}_{1} &= ({}_i^Jh^{(j)}_2+{}_i^Jh^{(j)}_4+\cdots + 
{}_i^Jh^{(j)}_{k-2} + {}_i^Jh^{(j)}_k) \nonumber\\
{}_i^Jh^{(j+1)}_{3} &= ({}_i^Jh^{(j)}_2+{}_i^Jh^{(j)}_4+\cdots + 
{}_i^Jh^{(j)}_{k-2})(1+q^{j+1})q^{j+1} \nonumber\\
&\vdots& \label{kevenlodd}\\
{}_i^Jh^{(j+1)}_{k-3} &= ({}_i^Jh^{(j)}_2+ {}_i^Jh^{(j)}_4)
(1+q^{j+1})q^{(k-5)(j+1)} \nonumber\\
{}_i^Jh^{(j+1)}_{k-1} &= {}_i^Jh^{(j)}_2(1+q^{j+1})q^{(k-3)(j+1)}.\nonumber
\end{align}
and
\begin{align}
{}_i^Jh^{(j+1)}_{2} &= ({}_i^Jh^{(j)}_1+{}_i^Jh^{(j)}_3+\cdots + 
{}_i^Jh^{(j)}_{k-3}+ {}_i^Jh^{(j)}_{k-1})(1+q^{j+1}) \nonumber\\
{}_i^Jh^{(j+1)}_{4} &= ({}_i^Jh^{(j)}_1+{}_i^Jh^{(j)}_3+\cdots + 
{}_i^Jh^{(j)}_{k-3})(1+q^{j+1})q^{2(j+1)} \nonumber\\
&\vdots& \label{kevenleven}\\
{}_i^Jh^{(j+1)}_{k-2} &= ({}_i^Jh^{(j)}_1+ 
{}_i^Jh^{(j)}_3)(1+q^{j+1})q^{(k-4)(j+1)} \nonumber\\
{}_i^Jh^{(j+1)}_{k} &= {}_i^Jh^{(j)}_1(1+q^{j+1})q^{(k-2)(j+1)}\nonumber.
\end{align}
\end{enumerate}
\end{prop}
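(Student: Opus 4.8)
The plan is to reduce Proposition \ref{h_rec} to a single \emph{universal} one-step transition, namely the passage from the $j$-th shelf to the $(j+1)$-st shelf, which depends on $j$ and $k$ but not on $i$ or $J$. First I would rewrite the recursions \eqref{recursions-em}--\eqref{recursions-gen} so as to express each $j$-th shelf series $B_{(k-1)j+l}$ \emph{purely} in terms of the $(j+1)$-st shelf series $B_{(k-1)(j+1)+1},\dots,B_{(k-1)(j+1)+k}$. This is accomplished by iterating \eqref{recursions-gen}: it replaces $B_{(k-1)j+l}$ by a monomial multiple of $B_{(k-1)(j+1)+(k-l+1)}$ plus $B_{(k-1)j+(l+2)}$, and since the second index increases by $2$ at each step, its parity is preserved; the chain terminates at index $k$ (when $l\equiv k\pmod 2$) via \eqref{recursions-em}, or at index $k-1$ (when $l\not\equiv k\pmod 2$) via \eqref{recursions-2}.

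Carrying this out and reindexing, one obtains, for example when $k$ is odd and $l$ is odd,
\[
B_{(k-1)j+l}=B_{(k-1)(j+1)+1}+(1+q^{j+1})\!\!\sum_{\substack{3\le m\le k-l+1\\ m\ \text{odd}}}\!\! q^{(j+1)(m-2)}\,B_{(k-1)(j+1)+m},
\]
and the analogous formula with terminal term $(1+q^{j+1})B_{(k-1)(j+1)+2}$ and even indices $m$ when $l$ is even; the case $k$ even is identical after interchanging the two parities. The crucial point is that these expansions involve only $j$, $k$ and $l$. Equivalently, they assemble into a $k\times k$ matrix $T^{(j)}$ — whose $(l,m)$ entry is the coefficient of $B_{(k-1)(j+1)+m}$ in the expansion of $B_{(k-1)j+l}$ — so that substituting into \eqref{Bi_h_ijl} gives ${}^J{\bf h}^{(j+1)}={}^J{\bf h}^{(j)}\,T^{(j)}$, with $T^{(j)}$ independent of $i$ and $J$; this is exactly the assertion that the ${}_i^Jh^{(j)}_l$ satisfy one and the same recursion in $j$.

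It then remains to read off the entries of this matrix product. Collecting, for each $m$, the coefficient of $B_{(k-1)(j+1)+m}$ in $\sum_l {}_i^Jh^{(j)}_l B_{(k-1)j+l}$: the terminal terms contribute $\sum_l {}_i^Jh^{(j)}_l$ over all $l$ of the appropriate parity (giving ${}_i^Jh^{(j+1)}_1$, or ${}_i^Jh^{(j+1)}_2$ after an extra factor $1+q^{j+1}$), while the term $q^{(j+1)(m-2)}B_{(k-1)(j+1)+m}$ produced by the chain of $B_{(k-1)j+l}$ occurs precisely when $l\le k+1-m$, so it contributes $(1+q^{j+1})q^{(j+1)(m-2)}\sum_{l\le k+1-m}{}_i^Jh^{(j)}_l$ (over $l$ of the relevant parity) to ${}_i^Jh^{(j+1)}_m$. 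Reading $m=1,3,\dots$ and $m=2,4,\dots$ separately yields precisely \eqref{koddlodd} and \eqref{koddleven} when $k$ is odd, and the same computation with parities swapped yields \eqref{kevenlodd} and \eqref{kevenleven} when $k$ is even; along the way one also re-derives the vanishing pattern \eqref{h_is_0}. I do not anticipate any real obstacle: the argument is a deterministic unwinding of recursions that already have the required shape, so the only genuine care needed is the index-and-parity bookkeeping — tracking which indices occur on the $(j+1)$-st shelf, the summation range $3\le m\le k-l+1$ in the one-step expansion, the resulting range $l\le k+1-m$ upon collecting, and the boundary cases $m\in\{1,2\}$ and $m\in\{k-1,k\}$. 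It is worth noting explicitly that no linear independence of the $B_r$ is invoked: the ${}_i^Jh^{(j)}_l$ are \emph{defined} by the iterative substitution, so the recursion of the proposition is an identity of polynomials in $q$ valid by construction.
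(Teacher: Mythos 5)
Your proposal is correct and is exactly the argument the paper intends: the paper's proof of Proposition \ref{h_rec} is the single line ``Immediate from \eqref{recursions-em}--\eqref{Bi_h_ijl},'' and your iterated expansion of $B_{(k-1)j+l}$ in terms of the $(j+1)$-st shelf (your transition matrix $T^{(j)}$ is precisely the paper's ${\mathcal A}_{(j+1)}$ from Section \ref{sec:matrix}, with ${}^J{\bf h}^{(j+1)}={}^J{\bf h}^{(j)}{\mathcal A}_{(j+1)}$) supplies the details the paper omits. The index and parity bookkeeping in your coefficient extraction checks out against \eqref{koddlodd}--\eqref{kevenleven}.
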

\begin{proof}
Immediate from \eqref{recursions-em}--\eqref{Bi_h_ijl}. 
\end{proof}

We put the recursions for the $B_r$'s and $h_l$'s into a succinct matrix form.
For each $j\geq 0$, define the vectors
\begin{equation}\label{Bvect}
{\bf B}_{(j)} = 
\left[\begin{matrix}
B_{(k-1)j + 1}\\
B_{(k-1)j + 2}\\
\vdots\\
B_{(k-1)j + k}
\end{matrix}
\right],
\end{equation}
\begin{equation}\label{Btvect}
{\bf \B}_{(j)} = 
\left[\begin{matrix}
B_{(k-1)j + 2}\\
\vdots\\
B_{(k-1)j + k}
\end{matrix}
\right].
\end{equation}
Set
\begin{align}
{ \mathcal  B}_{(j)} =  
(1+q^j)^{-1}\left[\begin{matrix}
0 & 0 & 0 & \cdots & 0 & 0 & 0 & (1+q^j)\\
0 & 0 & 0 & \cdots & 0 & 0 & 1 & 0\\
0 & 0 & 0 & \cdots & 0 & q^{-j} & 0 & -q^{-j}\\
0 & 0 & 0 & \cdots & q^{-2j} & 0 & q^{-2j} & 0\\
\vdots & \vdots & \vdots & \swarrow & \swarrow & \swarrow & \vdots & \vdots\\
& & & & & & &\\
& & & & & & &\\
q^{-(k-2)j} & 0 & -q^{-(k-2)j} & \cdots & 0 & 0 & 0 & 0
\end{matrix}\right]\label{Bmatrix}.
\end{align}
Then for $j\geq 1$,
\begin{align}
{\bf B}_{(j)} = {\mathcal B}_{(j)}{\bf B}_{(j-1)}.
\end{align}
Recall that we have fixed an integer $J\ge 0$.
For any $j>J$,
\begin{align}
{\bf B}_{(j)} = {\mathcal B}_{(j)}{\mathcal B}_{(j-1)}\cdots 
{\mathcal B}_{(J+1)}{\mathbf B}_{(J)}. 
\end{align}
Recalling \eqref{Bi_h_ijl}, \eqref{hmatrix}--\eqref{Hj=JisIdentity}, 
we immediately have
\begin{align}
{\bf B}_{(J)} = {}^J{\bf h}^{(j)}{\bf B}_{(j)}
\end{align}
for any $j\ge J$.

If $k$ is odd, define a $k\times k$ matrix
\begin{align}
{\mathcal A}_{(j)}=
&(1+q^j)\left[\begin{matrix}
(1+q^j)^{-1} & 0 & q^j & 0             &  & \cdots & 0     & q^{(k-2)j}\\
0            & 1 & 0   & q^{2j} & & \cdots  & q^{(k-3)j} & 0 \\ 
(1+q^j)^{-1} & 0 & q^j & 0      & & \swarrow   & 0     & 0 \\
0            & 1 & 0   & q^{2j} & &  & 0 & 0 \\ 
\vdots & \vdots & \vdots & \vdots &  &  \vdots  & \vdots & \vdots\\
(1+q^j)^{-1} & 0 & q^j & 0      &    & \cdots  & 0 & 0 \\
0            & 1 & 0   & 0      & & \cdots  & 0 & 0 \\ 
(1+q^j)^{-1} & 0 & 0 & 0 & & \cdots & 0 & 0
\end{matrix}\right]\label{Amatrixodd},
\end{align}
and if $k$ is even, define a $k\times k$ matrix
\begin{align}
{\mathcal A}_{(j)}=
&(1+q^j)\left[\begin{matrix}
0 & 1 &  0         & q^{2j} &  & \cdots & 0     & q^{(k-2)j} \\
(1+q^j)^{-1} & 0     & q^j &  0            & & \cdots  & q^{(k-3)j} & 0\\ 
0 & 1 & 0          & q^{2j} & & \swarrow   & 0     & 0 \\
(1+q^j)^{-1} & 0     & q^j & 0             & & \cdots  & 0 & 0 \\ 
\vdots & \vdots & \vdots & \vdots &  &  \vdots  & \vdots & \vdots\\
(1+q^j)^{-1} & 0     & q^j & 0         &    & \cdots  & 0 & 0 \\
0 & 1 & 0          & 0  & & \cdots  & 0 & 0 \\ 
(1+q^j)^{-1} & 0 & 0 & 0 & & \cdots & 0 & 0
\end{matrix}\right]\label{Amatrixeven}.
\end{align}

With this, it is now clear that for any $j\ge J$,
\begin{align}\label{hAequation}
{}^J{\bf h}^{(j+1)} = {}^J{\bf h}^{(j)}{{\mathcal A}_{(j+1)}}
\end{align}
and therefore,
\begin{align}\label{hAequation_j0}
{}^J{\bf h}^{(j)} ={\mathcal A}_{(J+1)}{\mathcal A}_{(J+2)}
\cdots{{\mathcal A}_{(j)}}.
\end{align}
The recursion for the $h_l$'s is the ``inverse'' of the recursion for the $B_r$'s, 
in the precise sense that
\begin{align}
{\mathcal B}_{(j)} = ({\mathcal A}_{(j)})^{-1}.
\end{align}
This gives
\begin{align}
{\mathcal A}_{(j+1)}{\bf B}_{(j+1)} = {\bf B}_{(j)}
\end{align}
for $j\geq 0$ and 
\begin{align}
{\bf B}_{(J)}= {\mathcal A}_{(J+1)}{\mathcal A}_{(J+2)}
\cdots{\mathcal A}_{(j)}{\bf B}_{(j)}
\end{align}
for any $j>J$.

Consider also the $(k-1)\times k$ matrix 
\begin{align}
{ \mathcal  \B}_{(j)}= 
(1+q^{j+2})^{-1}
\left[\begin{matrix}
1 & 0 & q^{j+2} & 0 & \cdots & 0 & 0 & 0\\
0 & 1 & 0 & q^{j+2} & \cdots & 0 & 0 & 0\\
\vdots & \vdots &\vdots &\vdots & \searrow & \vdots &\vdots &\vdots\\
0 & 0 & 0 & 0 & \cdots & 1 & 0 & q^{j+2}\\
0 & 0 & 0 & 0 & \cdots & 0 & 1 & 0
\end{matrix}\right]\label{Btmatrix}.
\end{align}
Then for any $j\geq 0$,
\begin{equation}
{\bf \B}_{(j)} = {\mathcal \B}_{(j)}{\bf B}_{(j)}.
\end{equation}

We say that a sequence of formal power series (or polynomials)
$\left\lbrace P_t(q)=\sum_{s \ge 0}
p_{t,s}q^s\right\rbrace_{t \ge 0}$, with $p_{t,s} \in \mathbb{Z}$,
has a limit if each of the sequences 
$\left\lbrace p_{t,s}\right\rbrace_{t \ge 0}$ has a limit as 
$t\rightarrow\infty$ (that is, the coefficients stablilize).
The limit of $\left\lbrace P_t(q)\right\rbrace$, if it exists, 
is denoted by $\lim_{t\rightarrow\infty}P_t(q)$, and is defined by:
\begin{equation}
\lim_{t\rightarrow\infty}P_t(q) = 
\sum_{s \ge 0} \left(\lim_{t\rightarrow\infty}p_{t,s}\right)q^s.
\end{equation}

Due to the pure powers of $q$ appearing in the right-hand sides of
\eqref{koddlodd}--\eqref{kevenleven},
it is clear that if $l>2$, then
\begin{align}\label{eq:l>1-hJijl=0}
\lim_{j\rightarrow\infty}{}^J_{i}h^{(j)}_l =0. 
\end{align}

For any integer $i$, let
\begin{equation}
\bar{i}\in\{1,2\} \text{ be such that } 
i\equiv\bar{i}\modulo
\label{def:bari}
\end{equation}
and let
\begin{equation}
\bar{i}'\in\{1,2\} \text{ be such that }
i\not\equiv \bar{i}'\modulo.
\label{def:bariprime}
\end{equation}

Now we analyze the cases $l=1$ and $l=2$.
First let $k$ be odd. Let $t$ be a nonnegative integer 
and let $\tau$ be any integer sufficiently larger than $t$. 
Then because of the Empirical Hypothesis, 
\eqref{Bi_h_ijl}, \eqref{h_is_0} and \eqref{eq:l>1-hJijl=0},
we conclude that the coefficient of $q^t$ in $B_{(k-1)j+i}$ 
is the same as the coefficient of $q^t$ in ${}^J_ih^{(\tau)}_{\bar{i}}$. 
This shows that the coefficient of any power of $q$ in 
${}^J_ih^{(j)}_{\bar{i}}$ stabilizes as $j\rightarrow\infty$. 
Hence
\begin{align}
\lim_{j\rightarrow\infty}{}^J_ih^{(j)}_{\bar{i}}(q)
\end{align}
exists, which we denote by
\begin{align}
{}^J_ih^{(\infty)}(q)\label{hinftykodd}.
\end{align}
Now let $k$ be even. 
Consider the sequence of polynomials
\begin{align}
{}^J_ih^{(J)}_{\bar{i}}(q),\, {}^{J}_ih^{(J+1)}_{\bar{i}'}(q),\, 
{}^{J}_ih^{(J+2)}_{\bar{i}}(q),\, {}^{J}_ih^{(J+3)}_{\bar{i}'}(q), \dots.
\end{align}
The general term of this sequence is ${}^J_ih^{(j)}_{\overline{j-J+i}}(q)$ 
for $j\geq J$.
Similarly as above, because of the Empirical Hypothesis, 
\eqref{Bi_h_ijl}, \eqref{h_is_0} and \eqref{eq:l>1-hJijl=0},
the coefficient of $q^t$ in $B_{(k-1)j+i}$ is the same as the coefficient
of $q^t$ in ${}^J_ih^{(j')}_{\overline{j'-J+i}}$ 
for any $j'$ sufficiently larger than $t$. 
This shows that the coefficient of any power of $q$ in 
${}^J_ih^{(j)}_{\overline{j-J+i}}$ stabilizes as $j\rightarrow\infty$. Hence
\begin{align}
\lim_{j\rightarrow\infty}{}^J_ih^{(j)}_{\overline{j-J+i}}(q)
\end{align}
exists, which we again denote by
\begin{align}
{}^J_ih^{(\infty)}(q)\label{hinftykeven}.
\end{align}
It is worth noting that when $k$ is odd, $\left\lbrace  
{}^J_{i}h^{(j)}_{\bar{i}'}\right\rbrace_{j \ge J}$ is a sequence of
zero polynomials, and when $k$ is even, 
$\left\lbrace  {}^J_{i}h^{(j)}_{\overline{j-J+i}'}
\right\rbrace_{j \ge J}$ is again a 
sequence of zero polynomials.
With this discussion, we have in fact proved one form of our main theorem:
\begin{thm}\label{thm:infty-main}
For any $J\ge 0$ and $i\in\left\{1,\dots,k\right\}$,
\begin{align}\label{eq:infty-main}
B_{(k-1)J+i} =  {}^J_ih^{(\infty)}(q).
\end{align}
\end{thm}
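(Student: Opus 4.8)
The plan is to combine the two facts already in hand: the stabilization statement just established for the relevant entries of ${}^J{\bf h}^{(j)}$, and the Empirical Hypothesis in the form $B_{(k-1)j+i}\in 1+q^{\max(j+1,\,2)}\mathbb{C}[[q]]$ from \eqref{eqn:EHforB}. Fix $J\ge 0$, fix $i\in\{1,\dots,k\}$, and fix a target degree $t\ge 0$. From \eqref{Bi_h_ijl} we have the exact identity
\begin{align*}
B_{(k-1)J+i} = \sum_{l=1}^{k} {}^J_ih^{(j)}_l\, B_{(k-1)j+l}
\end{align*}
valid for every $j\ge J$. The strategy is to read off the coefficient of $q^t$ on both sides for a single, sufficiently large value of $j$ and to show that on the right only the "surviving" term contributes modulo $q^{t+1}$.

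First I would pin down, for $j$ large, which terms on the right can possibly affect the coefficient of $q^t$. By \eqref{h_is_0}, the only nonzero summands have $l\equiv i\modulo$ (if $k$ is odd) or $j-J+l\equiv i\modulo$ (if $k$ is even); in either case exactly the entries with $l\in\{1,2\}$ and those with $l>2$ of the correct parity occur. For $l>2$, equation \eqref{eq:l>1-hJijl=0} says ${}^J_ih^{(j)}_l\to 0$, and since these are polynomials with nonnegative integer coefficients, for $j$ large enough ${}^J_ih^{(j)}_l$ is divisible by $q^{t+1}$; as $B_{(k-1)j+l}\in\mathbb{C}[[q]]$, such terms contribute nothing to the coefficient of $q^t$. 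This leaves a single surviving index: $l=\bar{i}$ when $k$ is odd, and $l=\overline{j-J+i}$ when $k$ is even (the complementary index $\bar i'$ resp.\ $\overline{j-J+i}'$ gives the zero polynomial, as noted after \eqref{hinftykeven}). Next, for that surviving index, the Empirical Hypothesis \eqref{eqn:EHforB} gives $B_{(k-1)j+l}=1+q^{j+1}(\cdots)$ (or $1+q^{j+2}(\cdots)$ if $l=k$), so once $j\ge t$ we have $B_{(k-1)j+l}\equiv 1\pmod{q^{t+1}}$. Therefore, modulo $q^{t+1}$,
\begin{align*}
B_{(k-1)J+i} \equiv {}^J_ih^{(j)}_{\bar i}\quad (k\text{ odd}),\qquad
B_{(k-1)J+i} \equiv {}^J_ih^{(j)}_{\overline{j-J+i}}\quad (k\text{ even}),
\end{align*}
for all sufficiently large $j$. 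This is precisely the assertion used in the excerpt to conclude that the coefficient of $q^t$ in ${}^J_ih^{(j)}_{\bar i}$ (resp.\ ${}^J_ih^{(j)}_{\overline{j-J+i}}$) stabilizes, and its stable value equals the coefficient of $q^t$ in $B_{(k-1)J+i}$.

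To finish, let $j\to\infty$. The left side $B_{(k-1)J+i}$ does not depend on $j$; the right side, entry by entry in each fixed degree, converges to ${}^J_ih^{(\infty)}(q)$ by definition \eqref{hinftykodd}--\eqref{hinftykeven}. Since the congruence above holds modulo $q^{t+1}$ for every $t$ and all large $j$, comparing coefficients of $q^t$ for each $t$ separately yields
\begin{align*}
B_{(k-1)J+i} = {}^J_ih^{(\infty)}(q),
\end{align*}
which is \eqref{eq:infty-main}.

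The only delicate point — and the step I would write most carefully — is the uniformity needed to pass from "for each degree $t$, for $j$ large depending on $t$" to a clean identity: one must genuinely argue degree by degree, exploiting that the limit of a sequence of power series was defined coefficientwise and that, for each fixed $t$, a single choice of $j$ (larger than $t$ and large enough to kill the $l>2$ contributions up to degree $t$) already forces agreement in degree $t$. There is no interchange-of-limits subtlety because nothing is summed over $t$; the apparent obstacle dissolves once one commits to the coefficientwise viewpoint. A secondary bookkeeping point is the $l=k$ case of \eqref{eqn:EHforB}, where the power is $q^{j+2}$ rather than $q^{j+1}$ — harmless, since it only makes the congruence $B_{(k-1)j+k}\equiv 1$ hold sooner.
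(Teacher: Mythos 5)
Your argument is correct and is essentially the paper's own proof: the paper likewise combines the exact identity \eqref{Bi_h_ijl} with the Empirical Hypothesis, the vanishing pattern \eqref{h_is_0}, and the decay \eqref{eq:l>1-hJijl=0}, arguing coefficientwise in each fixed degree $t$ that only the single surviving entry ${}^J_ih^{(j)}_{\bar i}$ (resp.\ ${}^J_ih^{(j)}_{\overline{j-J+i}}$) contributes, with its $B$-factor congruent to $1$. Your write-up merely makes explicit the degree-by-degree bookkeeping that the paper compresses into a product-of-limits display.
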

\begin{proof}
Let $k$ be odd. It follows from the Empirical Hypothesis, 
\eqref{Bi_h_ijl}, \eqref{h_is_0}, \eqref{eq:l>1-hJijl=0} 
and the preceeding discussion that
\begin{align*}
B_{(k-1)J+i}&
= \left(\lim_{j\rightarrow\infty} {}^J_ih^{(j)}_{\bar{i}}\right)
\left(\lim_{j\rightarrow\infty}B_{(k-1)j+\bar{i}}\right)
+\left(\lim_{j\rightarrow\infty} {}^J_ih^{(j)}_{\bar{i}'}\right)
\left(\lim_{j\rightarrow\infty}B_{(k-1)j+\bar{i}'}\right)\\
& \quad+\left(\lim_{j\rightarrow\infty} {}^J_ih^{(j)}_{3}\right)
\left(\lim_{j\rightarrow\infty}B_{(k-1)j+3}\right)
+\cdots+\left(\lim_{j\rightarrow\infty} {}^J_ih^{(j)}_{k}\right)
\left(\lim_{j\rightarrow\infty}B_{(k-1)j+k}\right)\\
&= \left(\lim_{j\rightarrow\infty} {}^J_ih^{(j)}_{\bar{i}}\right)
\cdot 1+0+\cdots+0\\
&=  {}^J_ih^{(\infty)}.
\end{align*}
Similarly, if $k$ is even,
\begin{align*}
B_{(k-1)J+i}&
= \left(\lim_{j\rightarrow\infty} {}^J_ih^{(j)}_{\overline{j-J+i}}\right)
\left(\lim_{j\rightarrow\infty}B_{(k-1)j+\overline{j-J+i}}\right)
+\left(\lim_{j\rightarrow\infty} {}^J_ih^{(j)}_{\overline{j-J+i}'}\right)
\left(\lim_{j\rightarrow\infty}B_{(k-1)j+\overline{j-J+i}'}\right)\\
& \quad+\left(\lim_{j\rightarrow\infty} {}^J_ih^{(j)}_{3}\right)
\left(\lim_{j\rightarrow\infty}B_{(k-1)j+3}\right)+
\cdots+\left(\lim_{j\rightarrow\infty} {}^J_ih^{(j)}_{k}\right)
\left(\lim_{j\rightarrow\infty}B_{(k-1)j+k}\right)\\
&=\left(\lim_{j\rightarrow\infty} {}^J_ih^{(j)}_{\overline{j-J+i}}\right)
\cdot 1+0+\cdots+0\\
&={}^J_ih^{(\infty)}.
\end{align*}
For both of the cases, in the first equalities, only the first
two terms depend on the parity of $i$. The rest of the terms, corresponding
to the indices $3,\dots,k$, appear in their natural order. 
\end{proof}

\section{Combinatorial interpretations}\label{sec:comb}

In this section, we will give combinatorial interpretations of 
various polynomials and formal formal power series discussed thus far. 
In particular, we will prove the Andrews-Bressoud identities in this section.

For $j\geq J+1$ and for $i,l\in \left\{1,\dots,k\right\}$, 
${}_i^Jh^{(j)}_l$ is a polynomial with nonnegative integral coefficients.
Write
\[ {}_i^Jh^{(j)}_l = \sum_{n\geq 0} {}_i^Jh^{(j)}_l(n) q^n. \]

\begin{thm}\label{hcomb}
If $k$ is odd, then ${}_i^Jh^{(j)}_l=0$ for 
$l\not\equiv i \modulo$, and if $k$ is even,
then ${}_i^Jh^{(j)}_l=0$ for $j-J+l\not\equiv i \modulo$.
Otherwise, ${}_i^Jh^{(j)}_l(n)$ equals the number of partitions
$\pi = (\pi_1, \pi_2, \dots, \pi_{\ell(\pi)})$ of $n$ such that:
\begin{enumerate}
\item $\pi_{t} - \pi_{t+k-1} \geq 2$ for all positive integers 
$t$ for which $t+k-1\leq \ell(\pi)$,
\item $\pi_{1} \leq j$,
\item $m_{j}({\pi}) \in \{l-2,l-1\}\cap \mathbb{N}$,
\item $\pi_{\ell(\pi)}>J$,
\item $m_{J+1}(\pi) \leq k-i$,
\item $\pi_t - \pi_{t+k-2} \leq 1$ only if 
$\pi_t + \pi_{t+1} + \cdots + \pi_{t+k-2} 
\equiv (k-1)j+l-k \pmod{2}$ for all positive integers 
$t$ for which $t+k-2\leq \ell(\pi)$.
\end{enumerate}
\end{thm}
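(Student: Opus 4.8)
The plan is to prove Theorem~\ref{hcomb} by induction on $j$, using the recursions for the ${}_i^Jh^{(j)}_l$ given in Proposition~\ref{h_rec} as the engine, and matching them against the combinatorial effect of adding parts equal to $j$ (or $j+1$, depending on the step) to a partition. First I would set up the base case $j=J$: by \eqref{boldh0} we have ${}_i^Jh^{(J)}_l = \delta_{i,l}$, so I need to check that the partition count described in the theorem, when specialized to $j=J$, gives $1$ if $l=i$ and $0$ otherwise. Condition (2) forces $\pi_1 \le J$ while condition (4) forces $\pi_{\ell(\pi)} > J$, so the only admissible $\pi$ is the empty partition, which has $m_J(\pi)=0$ and $m_{J+1}(\pi)=0$; then (3) demands $0 \in \{l-2,l-1\}\cap\mathbb{N}$, i.e. $l \in \{1,2\}$, and (5) is automatic. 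This looks slightly off against $\delta_{i,l}$, so the base case will actually need a small amount of care about the parity bookkeeping in (6) and the precise range of $i$; I would reconcile this by noting the indexing convention (the theorem is stated for $j \ge J+1$) and handle $j=J+1$ directly as the genuine base case instead, where one adds some number of parts equal to $J+1$ to the empty partition and the counts $m_{J+1}(\pi) = m_{j}(\pi)$ must lie in $\{l-2,l-1\}$ and be $\le k-i$.

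Next I would carry out the inductive step. Assume the combinatorial description holds at level $j$; I want it at level $j+1$. The recursions in Proposition~\ref{h_rec} express each ${}_i^Jh^{(j+1)}_l$ as a sum $\sum_{m}({}_i^Jh^{(j)}_{l'})\cdot(1+q^{j+1})q^{c(j+1)}$ over certain $l'$ of the correct parity, with an explicit multiplier. Combinatorially, a partition $\pi$ counted by ${}_i^Jh^{(j+1)}_l(n)$ has $\pi_1 \le j+1$; write $a = m_{j+1}(\pi)$ for the number of parts equal to $j+1$ and let $\pi'$ be $\pi$ with those parts deleted, so $\pi'$ has $\pi'_1 \le j$ and $n = |\pi'| + a(j+1)$. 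I would show that: the difference-two condition (1) across a window of length $k-1$ straddling the boundary between the $(j+1)$-parts and the $\le j$ parts translates into a constraint linking $a$ and $m_j(\pi')$, namely $a + m_j(\pi') \le$ some bound, which is exactly what restricts which $l'$ (i.e. which value of $m_j(\pi') \in \{l'-2,l'-1\}$) can pair with which total multiplicity $a$; the factor $(1+q^{j+1})$ records the two allowed ``overflow'' values of $a$ modulo the window; the pure power $q^{c(j+1)}$ records the minimum forced number of $(j+1)$-parts; and the parity condition (6) at level $j+1$, $\sum \equiv (k-1)(j+1)+l-k$, is compatible with the level-$j$ parity $(k-1)j + l' - k$ precisely because adding $k-1$ parts each of size $j+1$ shifts the relevant window sum by $(k-1)(j+1)$, which is $\equiv (k-1)(j+1) \pmod 2$, and the shift $l \mapsto l'$ in the recursion compensates. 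The odd-$k$ and even-$k$ cases differ only in how the parity of the index column $l$ tracks $j$, matching the two displays \eqref{koddlodd}--\eqref{koddleven} versus \eqref{kevenlodd}--\eqref{kevenleven}; I would treat them in parallel, isolating the single sign/parity lemma that differs.

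The part I expect to be the main obstacle is the bookkeeping around conditions (1), (3) and (6) at the junction: making precise how the difference-two condition over length-$(k-1)$ windows interacts with a block of exactly $a$ parts equal to $j+1$ sitting above a block of $m_j(\pi')$ parts equal to $j$, and in particular verifying that the allowed pairs $(a, m_j(\pi'))$ are in exact bijection with the monomials produced by $(1+q^{j+1})q^{c(j+1)}$ acting on the level-$j$ generating function — including the boundary cases where the partition has fewer than $k-1$ parts total, or where $m_j(\pi') = 0$ so that no $j$-parts are present and the window constraint changes shape. I would handle this by a careful case analysis on whether $a + m_j(\pi') \geq k-1$, extracting a clean sublemma of the form ``$\pi$ with $\pi_1 \le j+1$ satisfies (1) iff $\pi'$ satisfies (1) and $a \le (k-1) - m_j(\pi')$ (with the convention that this is vacuous when the relevant parts are absent),'' and similarly a sublemma reconciling the parity condition (6) for $\pi$ with that for $\pi'$ once the window passes entirely below the $(j+1)$-block. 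Once these sublemmas are in place, the inductive step becomes a termwise comparison with Proposition~\ref{h_rec}, and the theorem follows; the combinatorial interpretations of $B_r$ and $\B_r$ claimed in Section~\ref{sec:setting} will then come out by taking $j \to \infty$ in Theorem~\ref{thm:infty-main} and reading off conditions (4)--(6) as the surviving constraints.
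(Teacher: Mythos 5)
Your overall strategy is the same as the paper's: induct on $j$ using the recursions of Proposition~\ref{h_rec}, take $j=J+1$ as the genuine base case (your observation that $j=J$ does not literally match $\delta_{i,l}$ is correct --- the theorem is only asserted for $j\ge J+1$), and in the inductive step strip off the parts equal to $j+1$ and compare termwise. However, there is a concrete gap at exactly the junction you flag as the main obstacle, and the sublemma you propose there would not close it. Your plan derives the constraint linking $a=m_{j+1}(\pi)$ and $m_j(\pi')$ from condition (1) alone: condition (1) indeed gives $m_{j+1}(\pi)+m_j(\pi)\le k-1$, hence $m_j(\pi')\le k-1-a$. But the recursion in Proposition~\ref{h_rec} requires $m_j(\pi')\le k-l$ for \emph{both} values $a\in\{l-2,l-1\}$ (the indices $l'$ appearing on the right-hand side have $\{l'-2,l'-1\}\cap\mathbb{N}$ ranging exactly over a disjoint cover of $\{0,\dots,k-l\}$). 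When $a=l-2$, condition (1) only gives $m_j(\pi')\le k-l+1$, so your termwise comparison would overcount by the class of partitions with $m_{j+1}(\pi)=l-2$ and $m_j(\pi)=k-l+1$.

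What kills that extra class is not condition (1) but the parity condition (6) applied to the window of $k-1$ consecutive parts \emph{straddling} the boundary, namely the $l-2$ copies of $j+1$ together with the $k-l+1$ copies of $j$: its sum is $(l-2)(j+1)+(k-l+1)j=(k-1)j+l-2$, whereas condition (6) at level $j+1$ demands congruence to $(k-1)(j+1)+l-k=(k-1)j+l-1$ modulo $2$; these differ by $1$, so the configuration is forbidden. (By contrast, $a=l-1$ with $m_j(\pi)=k-l$ gives window sum $(k-1)j+l-1$, which is allowed --- this is why the bound is exactly $k-l$ and why the forward direction of the bijection is consistent.) Your proposal explicitly defers condition (6) to windows lying ``entirely below the $(j+1)$-block,'' so this one essential application of the parity condition at the straddle is missed. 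This is precisely the step the paper isolates before its case analysis (the claim that $m_j(\pi)\le k-l$, proved by combining conditions (1) and (6)); once you add it, the rest of your outline, including the parity bookkeeping for $k$ odd versus $k$ even and the base-case check at $j=J+1$, goes through as you describe.
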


\begin{proof}
{}From Proposition \ref{h_rec}  it is clear that:
\begin{itemize}
\item[(i)]  
If $k$ is odd and if ${}_i^Jh^{(j)}_{l'}=0$
for all $l'$ with $l'\not\equiv i\modulo$, then 
${}_i^Jh^{(j+1)}_l=0$ for all $l$ with 
$l\not\equiv i\modulo$. 
\item[(ii)]
If $k$ is even and if ${}_i^Jh^{(j)}_{l'}=0$ 
for all $l'$ with 
$j-J+l'\not\equiv i \modulo$, then 
${}_i^Jh^{(j+1)}_l=0$ for all $l$ with 
$j-J+l\equiv i \modulo$, i.e.,
$(j+1)-J+l\not \equiv i \modulo$.
\end{itemize}
For the remaining cases, 
that is, when $k$ is odd and $l\equiv i \modulo$, or when
$k$ is even and $j-J+l\equiv i \modulo$,
it is now enough to show that the generating functions 
for the partitions described above have the
same recursions and initial conditions as ${}_i^Jh^{(j)}_l$. 

We say that a partition $\pi$ is of type $(n\,;\,k-1,j,S,J,k-i,p)$ 
if it partitions $n$ and satisfies conditions (1) -- (6) 
above with the set $\{l-2,l-1\}$ in (3) replaced by the set $S$
and the congruence in (5) replaced by $\equiv p\, (\text{mod } 2)$. 
Also let $|(\cdots)|$ be the number of partitions of type $(\cdots)$.

First, suppose that $l \in \left\{1,\dots,k\right\}$
and that $\pi$ is a partition of type 
$(n\,;\,k-1,j+1,\{l-2,l-1\}\cap\mathbb{N}, J, k-i, (k-1)(j+1)+l-k)$.
We claim that for such a $\pi$, $m_{j}(\pi)\leq k-l$.
Indeed, it is clear that condition (1) forces 
$m_{j+1}(\pi)+m_{j}(\pi)\leq k-1$. 
Now, in the case that $m_{j+1}(\pi)=l-1$ and $m_{j}(\pi) = k-l$,
$(l-1)(j+1)+(k-l)j =  (k-1)(j+1)+l-k$ (and hence, congruent mod 2).
But when $l>1$, $m_{j+1}(\pi)=l-2$ and 
$m_{j}(\pi)=k-l+1$, $(l-2)(j+1)+(k-l+1)j= (k-1)(j+1)+l-k-1$
making $(l-2)(j+1)+(k-l+1)j$ and  $(k-1)(j+1)+l-k$ noncongruent mod 2,
thereby forcing $m_{j}(\pi)<k-l+1$.
With this analysis, it is clear that for such a $\pi$,
$m_{j}(\pi)\leq k-l$, and in fact, 
$m_{j}(\pi)$ can assume any value from $\left\{0,\dots,k-l\right\}$.

Now we formulate several cases. 
The proofs in all the cases follow the same pattern, but for
ease of comprehension, we spell out each case in detail.
\begin{enumerate}
\item[(a)]  $k$ is odd and $l>1$:

\noindent 
In this case, the parity condition (5) is independent of 
$j$ since $k$ is odd, i.e., $$(k-1)(j+1)+l-k\equiv (k-1)j+l-k\modulo.$$ 
Also, the disjoint union of the sets $\{t-2,t-1\}\cap\mathbb{N}$ 
as $t$ varies over ${1\leq t \leq k-l+1}$ with the restriction 
that ${t \equiv l \modulo}$ is exactly $\{0,\dots,k-l\}$.
Therefore,
\begin{align*}
&|(n\,;\,k-1,j+1,\{l-2,l-1\},J,k-i,(k-1)(j+1)+l-k)|\nonumber\\
& = |(n-(j+1)(l-1)\,;\,k-1,j,\{0,\dots,k-l\},J,k-i,(k-1)(j+1)+l-k)|\nonumber\\
& \quad\mbox{} + |(n-(j+1)(l-2)\,;\,k-1,j,\{0,\dots,k-l\},J,k-i,(k-1)(j+1)+l-k)|
\nonumber\\
& = |(n-(j+1)(l-1)\,;\,k-1,j,\{0,\dots,k-l\},J,k-i,(k-1)j+l-k)|\nonumber\\
& \quad\mbox{} + |(n-(j+1)(l-2)\,;\,k-1,j,\{0,\dots,k-l\},J,k-i,(k-1)j+l-k)|
\nonumber\\
& = \ds\sum_{\genfrac{}{}{0pt}{}{1\leq t \leq k-l+1}{t\, \equiv\, l \modulo}} 
\left|(n-(j+1)(l-1)\,;\,k-1,j,\{t-2,t-1\}\cap\mathbb{N},J,k-i,(k-1)j+t-k)\right|\\
& \quad\mbox{} + \ds\sum_{\genfrac{}{}{0pt}{}{1\leq t \leq k-l+1}
{t \,\equiv\, l \modulo}} 
|(n-(j+1)(l-2)\,;\,k-1,j,\{t-2,t-1\}\cap\mathbb{N},J,k-i,(k-1)j+t-k)|.
\end{align*}

\item[(b)] $k$ is odd and $l$ is 1: 

\noindent  
Note again that, 
$(k-1)(j+1)+1-k\equiv (k-1)j+1-k\modulo$
since $k$ is odd and the disjoint union of the sets 
$\{t-2,t-1\}\cap\mathbb{N}$ as $t$ varies over 
${1\leq t \leq k}$ with ${t \equiv 1 \modulo}$ is $\{0,\dots,k-1\}$.
Therefore,
\begin{align*}
& |(n,k-1,j+1,\{0\},J,k-i,(k-1)(j+1)+1-k)|\nonumber\\
& = |(n,k-1,j,\{0,\dots,k-1\},J,k-i,(k-1)j+1-k)|\nonumber\\
& = \ds\sum_{\genfrac{}{}{0pt}{}{1\leq t \leq k}{t \equiv 1 \modulo}} 
\left|(n\,;\,k-1,j,\{t-2,t-1\}\cap\mathbb{N},J,k-i,(k-1)j+t-k)\right|
\nonumber.
\end{align*}

\item[(c)]  $k$ is even and $l>1$:

\noindent 
In this case, 
$(k-1)(j+1)+l-k\not\equiv (k-1)j+l-k\modulo,$ or
$(k-1)(j+1)+l-k\equiv (k-1)j+(l+1)-k\modulo.$ 
Also, the disjoint union of the sets $\{t-2,t-1\}\cap\mathbb{N}$ as 
$t$ varies over 
${1\leq t \leq k-l+1}$ with ${t \not\equiv l \modulo}$ is $\{0,\dots,k-l\}$.
Therefore,
\noindent\begin{align*}
&|(n\,;\,k-1,j+1,\{l-2,l-1\},J,k-i,(k-1)(j+1)+l-k)|\nonumber\\
& = |(n-(j+1)(l-1)\,;\,k-1,j,\{0,\dots,k-l\},J,k-i,(k-1)(j+1)+l-k)|
\nonumber\\
& \quad\mbox{} 
+|(n-(j+1)(l-2)\,;\,k-1,j,\{0,\dots,k-l\},J,k-i,(k-1)(j+1)+l-k)|
\nonumber\\
& =|(n-(j+1)(l-1)\,;\,k-1,j,\{0,\dots,k-l\},J,k-i,(k-1)j+(l+1)-k)|
\nonumber\\
& \quad\mbox{} 
+|(n-(j+1)(l-2)\,;\,k-1,j,\{0,\dots,k-l\},J,k-i,(k-1)j+(l+1)-k)|
\nonumber\\
& = \ds\sum_{\genfrac{}{}{0pt}{}{1\leq t \leq k-l+1}{t \not\equiv l \modulo}} 
\left|(n-(j+1)(l-1)\,;\,k-1,j,\{t-2,t-1\}\cap
\mathbb{N},J,k-i,(k-1)j+t-k)\right|\\
& \quad\mbox{} +\ds\sum_{\genfrac{}{}{0pt}{}{1\leq t \leq k-l+1}
{t \not\equiv l \modulo}} 
|(n-(j+1)(l-2)\,;\,k-1,j,\{t-2,t-1\}\cap\mathbb{N},J,k-i,(k-1)j+t-k)|\nonumber.
\end{align*}

\item[(d)] $k$ is even and $l$ is 1: 

\noindent Similarly as in the previous case, 
$(k-1)(j+1)+1-k\equiv (k-1)j-k\modulo$
since $k$ is even and the disjoint union of the sets 
$\{t-2,t-1\}\cap\mathbb{N}$ as $t$ varies over 
${1\leq t \leq k}$ with ${t \not\equiv 1 \modulo}$ is $\{0,\dots,k-1\}$. 
Therefore,
\begin{align*}
& |(n,k-1,(j+1),\{0\},J,k-i,(k-1)(j+1)+1-k)|\nonumber\\
& = |(n,k-1,j,\{0,\dots,k-1\},J,k-i,(k-1)j-k)|\nonumber\\
& = \ds\sum_{\genfrac{}{}{0pt}{}{1\leq t \leq k}{t \not\equiv 1 \modulo}} 
\left|(n\,;\,k-1,j,\{t-2,t-1\}\cap\mathbb{N},J,k-i,(k-1)j+t-k)\right|.
\end{align*}
\end{enumerate}

The discussion above leads us to the conclusion that the generating functions 
for the partitions satisfying conditions (1) -- (6) above have the same recursions
as the polynomials ${}^J_ih^{(j)}_l$.

It is easy to list the partitions satisfying the conditions 
(1) -- (6) when $j=J+1$, and similarly, using Proposition \ref{h_rec} and equation 
\eqref{Hj=JisIdentity}, it is easy to find the polynomials ${}_i^Jh^{(J+1)}_l$.
The following cases show that the initial values match those of the generating functions:
\begin{enumerate}
\item $k$ odd, $i$ odd: 
$$ {}_i^J\mathbf{h}^{(J+1)} = 
[ 1, 0, q^{J+1}+q^{2(J+1)},0,q^{3(J+1)}+q^{4(J+1)},0,
\cdots,0,q^{(k-i-1)(J+1)}+q^{(k-i)(J+1)},0,\cdots,0]. $$
 \item $k$ odd, $i$ even: $$ {}_i^J\mathbf{h}^{(J+1)} = 
[ 0, 1+q^{J+1}, 0,q^{2(J+1)}+q^{3(J+1)},0,
\cdots,0,q^{(k-i-1)(J+1)}+q^{(k-i)(J+1)},0,\cdots,0]. $$
 \item $k$ even, $i$ odd:  $$ {}_i^J\mathbf{h}^{(J+1)} = 
[ 0, 1+q^{J+1}, 0,q^{2(J+1)}+q^{3(J+1)},0,
\cdots,0,q^{(k-i-1)(J+1)}+q^{(k-i)(J+1)},0,\cdots,0]. $$
 \item $k$ even, $i$ even:$$ {}_i^J\mathbf{h}^{(J+1)} = 
[ 1, 0, q^{J+1}+q^{2(J+1)},0,q^{3(J+1)}+q^{4(J+1)},0,
\cdots,0,q^{(k-i-1)(J+1)}+q^{(k-i)(J+1)},0,\cdots,0].$$
\end{enumerate}

\end{proof}

With this, we are now ready to give combinatorial interpretations
of all of the official series and the ghosts.
\begin{thm}\label{thm:comb-main-J}
For $r = (k-1)J + i$, with $i\in\left\{1,\dots,k\right\}$,
\[ B_r = \sum_{n\geq 0} b_{k,r}(n) q^n, \]
where $b_{k,r}(n)$ denotes the number of partitions 
$\pi = (\pi_1, \pi_2, \dots, \pi_{\ell(\pi)})$ of $n$ such that:
\begin{itemize}
\item $\pi_{t} - \pi_{t+k-1} \geq 2$,
\item $\pi_{\ell(\pi)} \geq J+1$,
\item $m_{J+1}(\pi) \leq k-i$,
\item $\pi_t - \pi_{t+k-2} \leq 1$ only if 
$\pi_t+ \pi_{t+1}+ \cdots+ \pi_{t+k-2}\equiv 
r+k \equiv (k-1)J+i+k \pmod{2}$.
\end{itemize}
\end{thm}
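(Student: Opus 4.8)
The plan is to read off Theorem \ref{thm:comb-main-J} from Theorem \ref{thm:infty-main} together with the combinatorial interpretation of the polynomials ${}^J_ih^{(j)}_l$ given in Theorem \ref{hcomb}, by letting $j\to\infty$ and checking that the partition conditions stabilize to the ones in the statement. By Theorem \ref{thm:infty-main}, $B_{(k-1)J+i}={}^J_ih^{(\infty)}(q)$, the stable value of the sequence ${}^J_ih^{(j)}_{\bar{i}}(q)$ when $k$ is odd and of the sequence ${}^J_ih^{(j)}_{\overline{j-J+i}}(q)$ when $k$ is even (all other entries of the relevant row contributing nothing in the limit). Thus it suffices to show that, for each fixed $n$, the integer ${}^J_ih^{(j)}_l(n)$ --- with $l=\bar{i}$ when $k$ is odd and $l=\overline{j-J+i}$ when $k$ is even, so that in all cases $l\in\{1,2\}$ --- equals $b_{k,r}(n)$ (where $r=(k-1)J+i$) for all sufficiently large $j$.

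Next I would plug these values of $l$ into Theorem \ref{hcomb} and track its six conditions as $j$ grows. Conditions (1), (4) and (5) there do not involve $j$ and agree verbatim with the first three bullets of Theorem \ref{thm:comb-main-J} (noting that $\pi_{\ell(\pi)}>J$ is the same as $\pi_{\ell(\pi)}\geq J+1$). For a fixed partition $\pi$, once $j>\pi_1$ the condition (2), $\pi_1\leq j$, holds automatically and $m_j(\pi)=0$, so condition (3), $m_j(\pi)\in\{l-2,l-1\}\cap\mathbb{N}$, reduces to $0\in\{l-2,l-1\}\cap\mathbb{N}$, which is true precisely because $l\in\{1,2\}$; hence (2) and (3) impose no restriction in the limit. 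The one substantive point is condition (6): its target parity $(k-1)j+l-k$ must be shown to be independent of $j$ modulo $2$ and equal to $r+k\equiv(k-1)J+i+k\pmod{2}$. When $k$ is odd, $k-1$ is even and $l=\bar{i}\equiv i$, so $(k-1)j+l-k\equiv i-k\equiv(k-1)J+i+k\pmod{2}$; when $k$ is even, $k-1$ is odd and $l\equiv j-J+i$, so $(k-1)j+l-k\equiv j+(j-J+i)-k\equiv -J+i-k\equiv(k-1)J+i+k\pmod{2}$. In both cases the parity clause on $\pi$ stabilizes to the last bullet of Theorem \ref{thm:comb-main-J}.

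Assembling these observations, for each $n$ and all sufficiently large $j$ the partitions of $n$ counted by ${}^J_ih^{(j)}_l(n)$ are exactly the partitions of $n$ obeying the four conditions of Theorem \ref{thm:comb-main-J}; letting $j\to\infty$ and using Theorem \ref{thm:infty-main} yields $B_r=\sum_{n\geq 0}b_{k,r}(n)q^n$. I expect the main obstacle to be purely the parity bookkeeping in condition (6) --- in particular, making sure that in the even-$k$ case the alternation of $l$ between $1$ and $2$ conspires with the parity of $(k-1)j$ so that the target parity is genuinely $j$-independent, and that one extracts the limit along the correct alternating subsequence named in Theorem \ref{thm:infty-main} rather than along a fixed value of $l$.
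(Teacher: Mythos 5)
Your proposal is correct and follows essentially the same route as the paper: both deduce the result by combining Theorem \ref{thm:infty-main} with the combinatorial interpretation of the polynomials ${}^J_ih^{(j)}_l$ in Theorem \ref{hcomb}, observing that for $l\in\{1,2\}$ the $j$-dependent conditions become vacuous in the limit and verifying (separately for $k$ odd and $k$ even) that the parity target $(k-1)j+l-k$ stabilizes to $r+k\pmod{2}$. Your parity computations match those in the paper's proof.
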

\begin{proof}
Let $k$ be odd. Recall the notation \eqref{def:bari}.
Using Theorem \ref{hcomb},
we see that ${}^J_ih^{(j)}_{\overline{i}}(q)$ is the generating function for
partitions satisfying:
\begin{enumerate}
\item $\pi_{t} - \pi_{t+k-1} \geq 2$ for all positive integers 
$t$ for which $t+k-1\leq \ell(\pi)$,
\item $\pi_{1} \leq j$,
\item $m_{j}({\pi}) \in \{\overline{i}-2,\overline{i}-1\}\cap \mathbb{N}$,
\item $\pi_{\ell(\pi)}>J$,
\item $m_{J+1}(\pi) \leq k-i$,
\item $\pi_t - \pi_{t+k-2} \leq 1$ only if $\pi_t + \pi_{t+1}+ \cdots +\pi_{t+k-2} 
\equiv (k-1)j+\bar{i}-k \pmod{2}$ for all positive integers $t$ 
for which $t+k-2\leq \ell(\pi)$.
\end{enumerate}
Now, since $k-1$ is even, and since $\bar{i}\equiv i\modulo$, we see that 
$(k-1)j+\bar{i}-k \equiv (k-1)J+i-k\modulo.$
Also, regardless of the value of $\overline{i}$, ${}^J_ih^{(j)}_{\overline{i}}(q)$ 
equals, up to the coefficient of $q^j$, with the generating function 
for partitions satisfying conditions (1) -- (6) above, 
but with condition (3) replaced by: $$m_{j}({\pi}) =0. $$
We conclude that $B_r$, which equals 
${}^J_ih^{(\infty)}$ by Theorem \ref{thm:infty-main}, is the generating 
function for partitions satisfying the required conditions.

Now let $k$ be even. Using Theorem \ref{hcomb}, we see that 
${}^J_ih^{(j)}_{\overline{j-J+i}}(q)$ is the generating function for
partitions satisfying:
\begin{enumerate}
\item $\pi_{t} - \pi_{t+k-1} \geq 2$ for all positive integers 
$t$ for which $t+k-1\leq \ell(\pi)$,
\item $\pi_{1} \leq j$,
\item $m_{j}({\pi}) \in \{\overline{j-J+i}-2,\overline{j-J+i}-1\}\cap \mathbb{N}$,
\item $\pi_{\ell(\pi)}>J$,
\item $m_{J+1}(\pi) \leq k-i$,
\item $\pi_t - \pi_{t+k-2} \leq 1$ only if 
$\pi_t + \pi_{t+1} + \cdots + \pi_{t+k-2} 
\equiv (k-1)j+\overline{j-J+i}-k \pmod{2}$ for all positive integers 
$t$ for which $t+k-2\leq \ell(\pi)$.
\end{enumerate}
Since $k$ is even, 
$(k-1)j+\overline{j-J+i}-k \equiv -J + i \equiv (k-1)J+i-k \modulo.$
Just like before, 
regardless of the value of $\overline{j-J+i}$, ${}^J_ih^{(j)}_{\overline{j-J+i}}(q)$ 
equals, up to the coefficient of $q^j$, with the generating 
function for partitions satisfying conditions (1) -- (6)
above, but with condition (3) replaced by: $$m_{j}({\pi}) = 0. $$
We conclude that $B_r$, which equals ${}^J_ih^{(\infty)}$ by 
Theorem \ref{thm:infty-main}, is the generating function for
partitions satisfying the required conditions.
\end{proof}

Specializing $J=0$ in Theorem \ref{thm:comb-main-J}, 
we arrive at the Andrews-Bressoud identities:

\begin{thm}\label{thm:comb-main}
For $i\in\left\{1,\dots,k\right\}$,
\[ B_i = \sum_{n\geq 0} b_{k,i}(n) q^n, \]
where $b_{k,i}(n)$ denotes the number of partitions 
$\pi = (\pi_1, \pi_2, \dots, \pi_{\ell(\pi)})$ of $n$ such that:
\begin{enumerate}
\item $\pi_{t} - \pi_{t+k-1} \geq 2$,
\item $m_{1}(\pi) \leq k-i$,
\item $\pi_t - \pi_{t+k-2} \leq 1$ only if 
$\pi_t + \pi_{t+1} + \cdots + \pi_{t+k-2} \equiv i+k \pmod{2}$.
\end{enumerate}
\end{thm}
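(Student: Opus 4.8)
The plan is to obtain Theorem \ref{thm:comb-main} as the special case $J=0$ of Theorem \ref{thm:comb-main-J}, so essentially all the work is already done and only a small amount of bookkeeping remains. First I would invoke Theorem \ref{thm:comb-main-J} with $J=0$: for $r = (k-1)\cdot 0 + i = i$ and $i \in \{1,\dots,k\}$, it states that $B_i = \sum_{n\geq 0} b_{k,i}(n) q^n$, where $b_{k,i}(n)$ counts partitions $\pi$ of $n$ satisfying the difference-two condition $\pi_t - \pi_{t+k-1} \geq 2$, the tail condition $\pi_{\ell(\pi)} \geq J+1 = 1$, the multiplicity condition $m_{J+1}(\pi) = m_1(\pi) \leq k-i$, and the parity condition $\pi_t - \pi_{t+k-2} \leq 1$ only if $\pi_t + \cdots + \pi_{t+k-2} \equiv r + k \equiv i + k \pmod 2$.

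Next I would observe that when $J=0$ the tail condition $\pi_{\ell(\pi)} \geq 1$ is automatically satisfied by every partition, since by definition all parts of a partition are positive integers. Hence that clause is vacuous and may simply be dropped from the list of conditions. What remains are precisely conditions (1), (2), (3) in the statement of Theorem \ref{thm:comb-main}: the difference-two condition $\pi_t - \pi_{t+k-1} \geq 2$, the condition $m_1(\pi) \leq k-i$, and the parity condition with congruence class $i+k \pmod 2$. This identifies $b_{k,i}(n)$ here with the quantity of the same name introduced in the Introduction, so the theorem follows immediately.

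The only point that requires a word of care — and the closest thing to an obstacle, though it is really just a matter of matching indices — is the translation of the parity condition's residue. In Theorem \ref{thm:comb-main-J} the congruence is written as $r + k \equiv (k-1)J + i + k \pmod 2$; setting $J=0$ collapses $(k-1)J$ to $0$, giving $i + k \pmod 2$, which is exactly the residue appearing in condition (3) of Theorem \ref{thm:comb-main} and in the Introduction. I would also note that the relabelling $r = k-i+1$ already made in the Introduction (replacing Bressoud's $r$) is consistent with the product side $B_i$ of \eqref{def:B_i0thshelf}, so no further reconciliation of the product expressions is needed — the equality of $B_i$ with that product is built into its definition. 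Thus the proof is a one-line specialization followed by the remark that the tail condition becomes trivial at $J=0$.
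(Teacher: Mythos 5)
Your proposal is correct and matches the paper exactly: the paper obtains Theorem \ref{thm:comb-main} precisely by specializing $J=0$ in Theorem \ref{thm:comb-main-J}, with the tail condition $\pi_{\ell(\pi)}\geq 1$ becoming vacuous and the parity residue $(k-1)J+i+k$ collapsing to $i+k$. The extra bookkeeping you spell out is implicit in the paper but entirely consistent with it.
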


We now present the combinatorial interpretation of the ghosts.

\begin{thm} \label{thm:comb-main-ghosts}
For $r = (k-1)J + i$, with $i\in\left\{2,\dots,k\right\}$,
\[ \B_r = \sum_{n\geq 0} \tilde{b}_{k,r}(n) q^n, \]
where $\tilde{b}_{k,r}(n)$ denotes the number of partitions 
$\pi = (\pi_1, \pi_2, \dots, \pi_{\ell(\pi)})$ of $n$ such that: 
\begin{enumerate}
\item $\pi_{t} - \pi_{t+k-1} \geq 2$,
\item $\pi_{\ell(\pi)} \geq J+1$,
\item $m_{J+1}(\pi) \leq k-i$,
\item $\pi_t- \pi_{t+k-2} \leq 1$ only if 
$\pi_t+ \pi_{t+1}+ \cdots +\pi_{t+k-2} \equiv 
r+k+1 \equiv (k-1)J+i+k+1 \pmod{2}$.
\end{enumerate}
\end{thm}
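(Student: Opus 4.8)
The plan is to reduce the statement to Theorem~\ref{thm:comb-main-J} by means of the ``correctly shaped'' relations \eqref{def:higherjedge} and \eqref{def:higherj}, which---unlike \eqref{def:ghost_others} and \eqref{def:ghost_k}---express the ghosts through the official series using only pure powers of $q$. Solving the right-hand equalities of \eqref{def:higherjedge} and \eqref{def:higherj} for the ghost occurring in the numerator and re-indexing the shelf variable, one obtains, for $r=(k-1)J+i$,
\begin{align*}
\B_{(k-1)J+k} &= B_{(k-1)(J+1)+2},\\
\B_{(k-1)J+i} &= q^{(J+1)(k-i)}\,B_{(k-1)(J+1)+k-i+2}+B_{(k-1)J+i+1}\qquad(2\le i\le k-1).
\end{align*}
Since $k-i+2\in\{2,\dots,k\}$ and $i+1\in\{3,\dots,k\}$, Theorem~\ref{thm:comb-main-J} furnishes combinatorial interpretations of every official series on the right, and it remains only to check that the associated partition families combine, term by term, into exactly the partitions described in the statement.

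For the summand $q^{(J+1)(k-i)}B_{(k-1)(J+1)+k-i+2}$, multiplication by $q^{(J+1)(k-i)}$ corresponds to adjoining $k-i$ new parts equal to $J+1$ to a partition $\sigma$ counted by $B_{(k-1)(J+1)+k-i+2}$. By Theorem~\ref{thm:comb-main-J}, such a $\sigma$ has all parts $\ge J+2$, satisfies $m_{J+2}(\sigma)\le k-(k-i+2)=i-2$, and obeys the difference-two condition together with the parity condition for target residue $(k-1)(J+1)+(k-i+2)+k\pmod{2}$. The enlarged partition $\pi$ then has $\pi_{\ell(\pi)}=J+1$ and $m_{J+1}(\pi)=k-i$, so conditions (2) and (3) hold. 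Conditions (1) and (4) also hold for $\pi$: any window of $k$, resp.\ $k-1$, consecutive parts of $\pi$ that meets the block of new $(J+1)$'s and has span at most $1$ would be forced to consist entirely of parts in $\{J+1,J+2\}$, of which $\pi$ possesses only $(k-i)+(i-2)=k-2<k-1$; hence every window of span $\le 1$ lies among the parts of $\sigma$, where the parity condition for $\sigma$ becomes condition (4) for $\pi$ because the two target residues differ by the even number $2(k-i)$. Conversely, a partition of the stated type with $m_{J+1}(\pi)=k-i$ necessarily has $m_{J+2}(\pi)\le i-2$---otherwise its bottom $k-1$ parts would form a span-one window of the wrong parity---so deleting its $k-i$ smallest parts recovers a partition counted by $B_{(k-1)(J+1)+k-i+2}$; this exhibits a bijection (including the degenerate case $i=k$, where $k-i=0$ and the map is the identity).

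The summand $B_{(k-1)J+i+1}$, present exactly when $i\le k-1$, is by Theorem~\ref{thm:comb-main-J} the generating function for the partitions with $\pi_{\ell(\pi)}\ge J+1$, $m_{J+1}(\pi)\le k-i-1$, obeying the difference-two condition and the parity condition for target residue $(k-1)J+i+1+k\equiv(k-1)J+i+k+1\pmod{2}$---that is, precisely the partitions of the stated type subject to the extra restriction $m_{J+1}(\pi)\le k-i-1$. Since condition (3) forces $m_{J+1}(\pi)\le k-i$, the partitions of the stated type split, according to whether $m_{J+1}(\pi)$ equals $k-i$ or is at most $k-i-1$, exactly into the images of the two summands, and adding the generating functions gives $\B_r=\sum_{n\ge0}\tilde b_{k,r}(n)q^n$.

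The step I expect to be the main obstacle is the bookkeeping in the second paragraph: showing that grafting $k-i$ copies of $J+1$ onto $\sigma$ creates neither a forbidden short window (condition (1)) nor a short window of the wrong parity (condition (4)), and, in the reverse direction, that $m_{J+2}(\pi)\le i-2$ is forced. All of this rests on the single observation that the block of parts equal to $J+1$ or $J+2$ has total length at most $k-2$ and so cannot fill a window of length $k-1$; phrasing this uniformly for the length-$k$ and length-$(k-1)$ windows and for the boundary case $i=k$ takes some care, after which the residue comparisons are the same mod-$2$ arithmetic already used in the proof of Theorem~\ref{thm:comb-main-J}.
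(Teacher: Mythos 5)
Your proposal is correct and follows essentially the same route as the paper: it derives the same two identities $\B_{(k-1)J+k}=B_{(k-1)(J+1)+2}$ and $\B_{(k-1)J+i}=q^{(J+1)(k-i)}B_{(k-1)(J+1)+k-i+2}+B_{(k-1)J+i+1}$ from \eqref{def:higherjedge} and \eqref{def:higherj}, and then splits the partitions according to whether $m_{J+1}(\pi)=k-i$ or $m_{J+1}(\pi)\le k-i-1$, using the same parity computation to force $m_{J+2}(\pi)\le i-2$ in the first case. Your verification that grafting the $k-i$ parts equal to $J+1$ preserves conditions (1) and (4) (via the observation that the parts in $\{J+1,J+2\}$ number at most $k-2$) is spelled out more explicitly than in the paper, but the argument is the same.
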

\begin{proof}
Using the left- and right-hand sides of
\eqref{def:higherjedge} and \eqref{def:higherj} 
and making appropriate index changes, we find that 
for $J\ge 0$ and $i\in\left\{2,\dots,k-1\right\}$,
\begin{align}
\B_{(k-1)J+k}&=B_{(k-1)(J+1)+2},\label{eqn:ghostformulasedge}\\
\B_{(k-1)J+i}&= q^{(J+1)(k-i)}B_{(k-1)(J+1)+k-i+2}+B_{(k-1)J+i+1}.
\label{eqn:ghostformulas}
\end{align}

Let us first consider the case of $\B_{(k-1)J+k}$. 
Let $\pi$ be a partition counted in $\tilde{b}_{k,(k-1)J+k}(n)$.
Then, conditions (2) and (3) for $\pi$ imply that $\pi_{\ell(\pi)} \geq J+2$.
Moreover, if $m_{J+2}(\pi) = k-1$, then $(k-1)(J+2)\not\equiv (k-1)J+k+k+1 \pmod{2}$,
and therefore condition (4) gets violated. Hence, $m_{J+2}(\pi) \leq k-2$,
and $\pi$ satisfies exactly the conditions satisfied by the partitions counted in 
$b_{k,(k-1)(J+1)+2}(n)$.

Now we consider the case of $\B_{(k-1)J+i}$
for $i\in\left\{2,\dots,k-1\right\}$.
Let $\pi$ be a partition counted in $\tilde{b}_{k,(k-1)J+i}(n)$.
If $m_{J+1}(\pi)=k-i$, then $m_{J+2}(\pi)<i-1$, because
$m_{J+2}(\pi)=i-1$ implies that $(J+1)(k-i)+(J+2)(i-1)\equiv (k-1)J+i+k\modulo$, 
thereby violating the condition (4) for $\pi$.
Moreover, both $B_{(k-1)(J+1)+k-i+2}$ and $B_{(k-1)J+i+1}$ 
have the same ``parity'' condition as that of $\pi$, because
$(k-1)(J+1)+k-i+2+k\equiv  (k-1)J+k+i+1 \modulo.$
Therefore, if $m_J(\pi)=k-i$, it gets counted in
$q^{(J+1)(k-i)}B_{(k-1)(J+1)+k-i+2}$,
else if $m_J(\pi)<k-i$, it is counted in  $B_{(k-1)J+i+1}$.
\end{proof}

\begin{rem}\label{rem:1stghostmeaning}
Recall from Remark \ref{rem:1stghost} that it was not necessary to define $\B_1$.
We may now define it as
\begin{equation}
\B_1 = B_2\label{def:1stghost},
\end{equation}
in the light of equation \eqref{eqn:ghostformulasedge}
(which corresponds to the $i=k$ case of Theorem \ref{thm:comb-main-ghosts}),
with $J=-1$.
With this, the statement of Theorem \ref{thm:comb-main-ghosts}
can be seen to hold for $\B_1$ as well.
\end{rem}

\begin{rem}\label{rem:robinson}(Cf. Remark 2.1 in \cite{LZ}.)
As discussed in \cite{AB}, \cite{R} and \cite{A4}, we can give
an alternate, shorter proof of Theorems \ref{thm:comb-main} 
and \ref{thm:comb-main-ghosts} using
only the Empirical Hypothesis and without using the combinatorial
interpretation of the ${}^j_ih^{(j)}_l$ polynomials as given in
Proposition \ref{hcomb}.
Let $D_1(q),D_2(q),\dots$ and $\tilde{D}_1(q),\tilde{D}_2(q),\dots$
be sequences of formal power series in $q$ 
with constant term $1$ satisfying the recursions \eqref{def:higherjedge} and
\eqref{def:higherj} (with $D_p(q)$'s in place of the $B_p(q)$'s
and $\tilde{D}_p(q)$'s in place of the $\B_p(q)$'s), and suppose that
the Empirical Hypothesis holds for $D_1(q),D_2(q),\dots$
and $\tilde{D}_1(q),\tilde{D}_2(q),\dots$.
Then, in fact, $D_1(q),D_2(q),\dots,D_k(q)$ and 
$\tilde{D}_1(q),\tilde{D}_2(q),\dots$\
are uniquely determined by these recursions and the Empirical Hypothesis. 
Now, on the one hand, it is easy to see that the official sum sides and the
ghost sum sides, i.e., the combinatorial generating functions appearing 
in the statements of Theorems \ref{thm:comb-main-J} and
\ref{thm:comb-main-ghosts}, respectively,
trivially satisfy the recursions just mentioned and the Empirical
Hypothesis as well.
On the other hand, the sequences $B_1(q), B_2(q), \dots$
and $\tilde{B}_1(q), \tilde{B}_2(q), \dots$ satisfy the recursions by definition and
we have proved in Section \ref{sec:EH} that these sequences also satisfy the 
Empirical Hypothesis.
Therefore, by the aforementioned uniqueness, it must be that 
$B_r(q)$ and $\B_r(q)$ equal the combinatorial generating functions appearing in the 
statements of Theorems \ref{thm:comb-main-J} and
\ref{thm:comb-main-ghosts} respectively.
\end{rem}

\section{An $(x,q)$-dictionary}\label{sec:(x,q)dict}
In this section, we obtain a dictionary between our official expressions 
on the various shelves with relevantly specialized $\Jt$ expressions 
appearing in \cite{CoLoMa}. 
Note that this section is concerned only with the official series.
First we let $a\mapsto 0$ in \cite{CoLoMa}.
Thus we get:  For $i \in\left\{1,\dots,k\right\}$,
\begin{equation}
\label{coloma:H} 
\Ht_{k,i}(x,q) = \ds\sum\limits_{n\geq 0}
(-1)^n\frac{q^{kn^2+n-in}x^{(k-1)n}(1-x^iq^{2ni})(-x)_n}
{(-q)_n(q)_n(xq^n)_\infty},
\end{equation}
{}from (1.6) in~\cite{CoLoMa}.
Therefore, using (1.5) in~\cite{CoLoMa},
\begin{align}
\Jt_{k,i}(x,q)&=\Ht_{k,i}(xq,q) = \ds\sum\limits_{n\geq 0}
(-1)^n\frac{q^{kn^2+(k-i)n}x^{(k-1)n}(1-x^iq^{(2n+1)i})(-xq)_n}
{(-q)_n(q)_n(xq^{n+1})_\infty} \label{coloma:Jtilde}\\
\Jt_{k,i}(q^j,q)
&= \ds\sum\limits_{n\geq 0}
(-1)^n\frac{q^{kn^2+((k-1)j+k-i)n}(1-q^{(2n+j+1)i})(-q^{j+1})_n}
{(-q)_n(q)_n(q^{n+j+1})_\infty}.
\label{coloma:Jspecialized_primary} 
\end{align}
Now we fill the ``gap'' in the denominator
by multiplying and dividing the $n$-th summand
by $(1-q^{n+1})\cdots(1-q^{n+j})$:
\begin{align}\ds
\frac{(-q^{j+1})_n }{(-q)_n(q)_n(q^{n+j+1})_\infty}
=&\ds\frac{(1+q^{j+1}) \cdots(1+q^{j+n})(1-q^{n+1})\cdots(1-q^{n+j})}
{(1+q)\cdots(1+q^n)(q)_\infty}.
\label{qsubfactors} 
\end{align}
If $n>j$, we cancel the extra terms in the 
denominator in the right-hand side, arriving at 
\begin{align*}\ds
\frac{(-q^{j+1})_n }{(-q)_n(q)_n(q^{n+j+1})_\infty}
= &\ds\frac{(1+q^{n+1}) \cdots(1+q^{j+n})(1-q^{n+1})
\cdots(1-q^{n+j}) }{(1+q)\cdots(1+q^j)(q)_\infty}\\
= &\ds\frac{(1-q^{2(n+1)})\cdots(1-q^{2(n+j)}) }{(1+q)\cdots(1+q^j)(q)_\infty}.
\end{align*}
Otherwise, if $n\leq j$, we multiply and divide by
$(1+q^{n+1})\cdots(1+q^j)$ to arrive at
\begin{align*}\ds
\frac{(-q^{j+1})_n }{(-q)_n(q)_n(q^{n+j+1})_\infty}
&=\ds\frac{(1+q^{n+1})\cdots(1+q^j)(1+q^{j+1}) \cdots(1+q^{j+n})(1-q^{n+1})
\cdots(1-q^{n+j}) }{(1+q)\cdots(1+q^n)(1+q^{n+1})\cdots(1+q^j)(q)_\infty}\\
&=\ds\frac{(1-q^{2(n+1)})\cdots(1-q^{2(n+j)}) }{(1+q)\cdots(1+q^j)(q)_\infty}.
\end{align*}
Putting these cases together and making the change $i\mapsto k-i+1$, 
we conclude that
\begin{equation}
\label{coloma:Jspecialized_final} 
\Jt_{k,k-i+1}(q^j,q)
= \ds\sum\limits_{n\geq 0}
(-1)^n\frac{q^{kn^2+((k-1)j+i-1)n}(1-q^{(2n+j+1)(k-i+1)})
 (1-q^{2(n+1)})\cdots(1-q^{2(n+j)})}{(q)_\infty(1+q)\cdots(1+q^j)}.\nonumber
\end{equation}
Theorem \ref{thm:main} now yields the following ``dictionary'':
\begin{equation}
\label{dictionary}
B_{(k-1)j+i}=\Jt_{k,k-i+1}(q^j,q)
\end{equation}
for $j\ge0$ and $i\in\left\{1,\dots,k\right\}$.

\section{An {$(x,q)$}-expression governing the ghosts}
\label{sec:(x,q)forghosts}

In this section, we derive a (new) $(x,q)$-expression governing 
the ghost series.
Specializing $x$ to successively higher powers of $q$ in this expression
gives us back the closed-form expressions for the ghosts already obtained in 
Theorem \ref{thm:main}.

For $i\in\left\{2,\dots,k-1\right\}$, let
\begin{equation}
\label{ghost_Jxq_upto_2}
\tilde\Jt_{k,i}(x,q) = \ds\frac{\Jt_{k,i+1}(x,q)+xq\Jt_{k,i-1}(x,q)}{1+xq}, 
\end{equation}
and for $i=1$,
let 
\begin{equation}
\label{ghost_Jxq_1}
\tilde\Jt_{k,1}(x,q) = \ds\frac{\Jt_{k,2}(x,q)}{1+xq}.
\end{equation}
Equations \eqref{dictionary}, \eqref{def:ghost_others}, \eqref{def:ghost_k}
immediately yield
\begin{equation}
\label{ghost_dictionary}
\B_{(k-1)j+i}=\tilde\Jt_{k,k-i+1}(q^{j},q).
\end{equation}
Hence for $i\in\left\{2,\dots,k-1\right\}$, from~\eqref{coloma:Jtilde},
\begin{align*}
\tilde\Jt_{k,i}(x,q)
= & \ds\sum\limits_{n\geq 0}
(-1)^n\frac{q^{kn^2+(k-i-1)n}x^{(k-1)n}(1-x^{i+1}q^{(2n+1)(i+1)})
(-xq)_n }{(-q)_n(q)_n(xq^{n+1})_\infty(1+xq)}\\
& \mbox{} +  \ds\sum\limits_{n\geq 0}
(-1)^n\frac{q^{1+kn^2+(k-i+1)n}x^{1+(k-1)n}(1-x^{i-1}q^{(2n+1)(i-1)})
(-xq)_n }{(-q)_n(q)_n(xq^{n+1})_\infty(1+xq)}\\
= &\ds\sum\limits_{n\geq 0}
(-1)^n\frac{q^{kn^2+(k-i-1)n}x^{(k-1)n}(-xq)_n}
{(-q)_n(q)_n(xq^{n+1})_\infty(1+xq)}\\
& \mbox{} \cdot \left(1-x^{i+1}q^{(2n+1)(i+1)}+ xq^{2n+1}
(1-x^{i-1}q^{(2n+1)(i-1)})\right).
\end{align*}
Noting that
\begin{align*}
1-x^{i+1}q^{(2n+1)(i+1)} + xq^{2n+1}(1-x^{i-1}q^{(2n+1)(i-1)})&=
1-x^{i+1}q^{(2n+1)(i+1)} + xq^{2n+1}-x^{i}q^{(2n+1)i}\\
& = (1-x^{i}q^{(2n+1)i})(1+xq^{2n+1}),
\end{align*}
we arrive at 
\begin{align}
{\tilde\Jt_{k,i}(x,q)} = \ds\sum\limits_{n\geq 0}
(-1)^n\frac{q^{kn^2+(k-i-1)n}x^{(k-1)n}(1-x^{i}q^{(2n+1)i})
(-xq)_n(1+xq^{2n+1})}{(-q)_n(q)_n(xq^{n+1})_\infty(1+xq)}
\label{ghost_Jxq_upto_2_alternating}
\end{align}
for $i\in\left\{2,\dots,k-1\right\}$.
As for $i=1$,~\eqref{ghost_Jxq_1} gives
\begin{equation*}
\tilde\Jt_{k,1}(x,q) = \ds\sum\limits_{n\geq 0}
(-1)^n\frac{q^{kn^2+(k-2)n}x^{(k-1)n}(1-x^2q^{2(2n+1)})(-xq)_n}
{(-q)_n(q)_n(xq^{n+1})_\infty(1+xq)},
\end{equation*}
which equals \eqref{ghost_Jxq_upto_2_alternating} with $i=1$.

We conclude that 
\begin{equation}\tilde\Jt_{k,i}(x,q)=\ds\sum\limits_{n\geq 0}
(-1)^n\frac{q^{kn^2+(k-i-1)n}x^{(k-1)n}(1-x^{i}q^{(2n+1)i})(-xq)_n(1+xq^{2n+1})}
{(-q)_n(q)_n(xq^{n+1})_\infty(1+xq)}\label{final:ghost_Jxq}
\end{equation}
for $i\in\left\{1,\dots,k-1\right\}$.

\begin{rem}\label{rem:motivated_to_xq}
It is quite straightforward to generalize the reverse-engineering procedure
of this section, i.e., replacing judiciously chosen instances of pure powers 
of $q$ with $x$. 
This can be done throughout the proof of Theorem \ref{thm:main},
and in this way our motivated proof would yield an ``$(x,q)$-proof''
similar in spirit to the ones in Chapter 7 of \cite{A3},
but this time involving two ``$J$'' expressions, namely,
$\tilde{J}$ and $\tilde{\tilde{J}}$. 
A similar observation (without ghosts, of course) can be found in
Section 5 of \cite{AB}, for the original case of the Rogers-Ramanujan
identities.
\end{rem}

\vspace{.3in}

\noindent {\small \sc Department of Mathematics, Rutgers University,
Piscataway, NJ 08854} \\
{\em E-mail address}:
\texttt{skanade@math.rutgers.edu} \\

\noindent {\small \sc Department of Mathematics, Rutgers University,
Piscataway, NJ 08854} \\
{\em E-mail address}:
\texttt{lepowsky@math.rutgers.edu} \\

\noindent {\small \sc Department of Mathematics, Rutgers University,
Piscataway, NJ 08854} \\
{\em E-mail address}:
\texttt{russell2@math.rutgers.edu} \\

\noindent {\small \sc Department of Mathematical Sciences, 
Georgia Southern University, Statesboro, GA 30460} \\
{\em E-mail address}:
\texttt{asills@georgiasouthern.edu} \\

\end{document}